\def\argmin{\mathop{\rm argmin}}
\newtheorem{theorem}{Theorem}
\newtheorem{corollary}{Corollary}
\newtheorem{lemma}{Lemma}
\newtheorem{definition}{Definition}
\newtheorem{remark}{Remark}
\newtheorem{assumption}{Assumption}
\newtheorem{proposition}{Proposition}
\title{\LARGE \bf
Bilevel Distributed Optimization in Directed Networks
}
\author{Farzad Yousefian$^{1}$
\thanks{*The author gratefully acknowledges the support of the National Science Foundation through CAREER grant ECCS-1944500.}
\thanks{$^{1}$Assistant Professor in the School of Industrial Engineering and Management, Oklahoma State University, Stillwater, OK 74078, USA
        {\tt\small (email: farzad.yousefian@okstate.edu)}}%
}
\begin{document}

\maketitle
\thispagestyle{empty}
\pagestyle{empty}

\begin{abstract}
Motivated by emerging applications in wireless sensor networks and large-scale data processing, we consider distributed optimization over directed networks where the agents communicate their information locally to their neighbors to cooperatively minimize a global cost function. We introduce a new unifying distributed constrained optimization model that is characterized as a bilevel optimization problem. This model captures a wide range of existing problems over directed networks including: (i) Distributed optimization with linear constraints; (ii) Distributed unconstrained nonstrongly convex optimization over directed networks. Employing a novel regularization-based relaxation approach and gradient-tracking schemes, we develop an iteratively regularized push-pull gradient algorithm. We establish the consensus and derive new convergence rate statements for suboptimality and infeasibility of the generated iterates for solving the bilevel model. The proposed algorithm and the complexity analysis obtained in this work appear to be new for addressing the bilevel model and also for the two sub-classes of problems. The numerical performance of the proposed algorithm is presented.
\end{abstract}

\section{Introduction}\label{sec:intro}
We consider a class of bilevel distributed optimization problems in directed networks given as follows:
\begin{align}\label{eqn:bilevel_problem}
&\min_{x \in \mathbb{R}^n}\   \sum_{i=1}^m f_i(x) \quad \hbox{s.t.} \  x \in \argmin_{x \in \mathbb{R}^n}\  \sum_{i=1}^m g_i(x),
\end{align}
where we make the following assumptions:
\begin{assumption}\label{assum:problem}
\noindent (a) Functions $f_i:\mathbb{R}^n \to \mathbb{R}$ are $\mu_f$--strongly convex and $L_f$--smooth for all $i$. (b) Functions $g_i:\mathbb{R}^n \to \mathbb{R}$ are convex and $L_g$--smooth for all $i$. (c) The set $\argmin_{x \in \mathbb{R}^n} \textstyle\sum_{i=1}^m g_i(x)$ is nonempty. 
\end{assumption}
Here, $m$ agents cooperatively seek to find among the optimal solutions to the problem $\min_{x \in \mathbb{R}^n}\textstyle\sum_{i=1}^mg_i(x)$, one that minimizes a secondary metric, i.e., $\textstyle\sum_{i=1}^mf_i(x)$. Here, functions $f_i$ and $g_i$ are known locally only by agent $i$ and the cooperation among the agents occurs over a directed network. Given a set of nodes $\mathcal{N}$, a directed graph (digraph) is denoted by $\mathcal{G} =\left(\mathcal{N},\mathcal{E}\right)$ where $\mathcal{E} \subseteq \mathcal{N}\times \mathcal{N}$ is the set of ordered pairs of vertices. For any edge $(i,j)\in\mathcal{E}$, $i$ and $j$ are called parent node and child node, respectively. Graph $\mathcal{G}$ is called {\it strongly connected} if there is a path between the pair of any two different vertices. The digraph induced by a given nonnegative matrix $\mathbf{B}\in \mathbb{R}^{m\times m}$ is denoted by $\mathcal{G}_{\mathbf{B}} \triangleq \left(\mathcal{N}_{\mathbf{B}},\mathcal{E}_{\mathbf{B}}\right)$, where $\mathcal{N}_{\mathbf{B}} \triangleq [m]$ and $(j,i) \in \mathcal{E}_{\mathbf{B}}$ if and only if $B_{ij}>0$. We let $\mathcal{N}_{\mathbf{B}}^{\text{in}}(i)$ and $\mathcal{N}_{\mathbf{B}}^{\text{out}}(i)$ denote the set of parents (in-neighbors) and the set of children (out-neighbors) of vertex $i$, respectively. Also, $\mathcal{R}_\mathbf{B}$ denotes the set of roots of all possible spanning trees in $\mathcal{G}_\mathbf{B}$.

\noindent \textbf{Special cases of the proposed model:} Problem \eqref{eqn:bilevel_problem} provides a unifying mathematical framework capturing several existing problems in the distributed optimization literature. From these, we present two important cases below:\\ 
\noindent (i) Distributed linearly constrained optimization in directed networks: Consider the model given as:
\begin{equation}\label{eqn:lp_cnstr_problem}
\min_{x \in \mathbb{R}^n}\  \textstyle\sum\nolimits_{i=1}^m f_i(x)\quad \hbox{s.t.} \quad   \begin{array}{@{}ll@{}}
    A_ix = b_i \quad \hbox{for all } i \in [m], \\
    x_j \geq 0 \quad \hbox{for } j \in \mathcal{J} \subseteq [n],
  \end{array} 
\end{equation}
where $A_i \in \mathbb{R}^{m_i\times n}$ and $b_i \in \mathbb{R}^{m_i}$ are known parameters. Let problem \eqref{eqn:lp_cnstr_problem} be feasible. Then, by defining for $i \in [m]$, \begin{align}\label{eqn:gi}
g_i(x) :=\tfrac{1}{2} \|A_ix-b_i\|_2^2+\tfrac{1}{2m}\textstyle\sum_{j\in \mathcal{J}}\max\{0,-x_j\}^2,
\end{align} problem \eqref{eqn:lp_cnstr_problem} is equivalent to \eqref{eqn:bilevel_problem} (cf. proof of Corollary \ref{cor:rate_lin_constr}).

\noindent (ii) Distributed unconstrained optimization in the absence of strong convexity: Let us define $f_i(x) : =\|x\|^2_2	/m$. Then, problem \eqref{eqn:bilevel_problem} is equivalent to finding the least $\ell_2$-norm solution of the following canonical distributed unconstrained optimization problem:
\begin{align}\label{eqn:l2_uncnstr_problem}
&\min_{x \in \mathbb{R}^n} \textstyle\sum\nolimits_{i=1}^m g_i(x),
\end{align}
where $g_i$'s are all smooth merely convex (cf. Corollary \ref{cor:rate_unconstr_nonstrongly}). 

\noindent \textbf{Existing theory on distributed optimization in networks:} 
The classical mathematical models, tools, and algorithms for \textit{consensus-based optimization} were introduced and studied as early as the '70s~\cite{Groot74} and '80s~\cite{Tsit84,Tsit86,BertTsitBook}. Of these, in the seminal work of Tsitsiklis~\cite{Tsit84}, it was assumed the agents share a \textit{global (smooth) objective} while their decision component vectors are distributed \textit{locally} over the network. In the past two decades, in light of the unprecedented growth in data and its imperative role in several broad fields such as social networks, biology, and medicine, the theory of distributed and parallel optimization over networks has much advanced. The distributed optimization problems with \textit{local objective functions} were first studied in~\cite{Lopes07,Nedich09}. In this framework, the agents communicate their local information with their neighbors in the network at discrete times to cooperatively minimize the global cost function. Without characterizing the communication rules explicitly, this model can be formulated as $\textstyle\sum\nolimits_{i=1}^m f_i(x)$ subject to $x \in \mathcal{X}$. Here, the local function $f_i$ is known only to the agent $i$ and $\mathcal{X}$ denotes the system constraint set. This modeling framework captures a wide spectrum of decentralized applications in the areas of statistical learning, signal processing, sensor networks, control, and robotics~\cite{Duarte14}. Because of this, in the past decade, there has been a flurry of research focused on the design and analysis of fast and scalable computational methods to address applications in networks. Among these, average-based consensus methods are one of the most studied approaches. Here, the network is characterized with a \textit{stochastic matrix} that is possibly time-varying. The underlying idea is that at a given time, each agent uses this matrix and obtains a weighted-average of its neighbors' local variables.  Then, the update is completed by performing a standard subgradient step for the agent. 
Random projection distributed scheme were developed for both synchronous and asynchronous cases, assuming $\mathcal{X}\triangleq \cap_{i=1}^m\mathcal{X}_i$~\cite{Lee16,Srivastava11}. For the constrained model where $\mathcal{X}$ is easy-to-project, a dual averaging scheme was developed in~\cite{Duchi12}. The algorithm EXTRA~\cite{EXTRA15} and its proximal variant were developed addressing $\mathcal{X}=\mathbb{R}^n$. 
EXTRA is a synchronous and time-invariant scheme and achieves a sublinear and a linear rate of convergence for smooth merely convex and strongly convex problems, respectively. Among many other works such as~\cite{Lobel11,FastDO14}, is the DIGing algorithm~\cite{Diging17} which was the first work achieving a linear convergence rate for unconstrained optimization over a time-varying network. When the graph is directed, a key shortcoming in the weighted-based schemes lies in that the double stochasticity requirement of the weight matrix is impractical. Push-sum protocols were first leveraged in~\cite{NedichAlex15,NedichAlex16,pushi2_2020} to weaken this requirement. Recently, the Push-Pull algorithm equipped with a linear convergence rate was developed in \cite{pushi1_2020} for unconstrained strongly convex problems. Extensions of push-sum algorithms to nonconvex regimes have been developed more recently~\cite{Sun19,Lorenzo16,Touri17}. Other popular distributed optimization schemes are the dual-based methods, such as ADMM-type methods studied in~\cite{BoydAdmm10,Wei13,Wei12,Ling14,Shi14,Aybat17}. Most of these algorithms can address only static and undirected graphs. Moreover, there are only a few works in the literature that can cope with constraints employing primal-dual methods~\cite{Chang14,Nunez15,Chang16,Aybat16}.	

\noindent \textbf{Research gap and contributions:} Despite much advances, the existing models and algorithms for in-network optimization have some shortcomings. For example, the problem is often assumed to be unconstrained, e.g., in Push-DIGing~\cite{Diging17} and Push-Pull \cite{pushi1_2020} algorithms that have been recently developed. Further, the complexity analysis in those algorithms is done under the assumption that the objective function is strongly convex. In this work, we aim at  addressing these shortcomings through considering the bilevel framework \eqref{eqn:bilevel_problem}. Utilizing a novel regularization-based relaxation approach, we develop a new push-pull gradient algorithm where at each iteration, the information of iteratively regularized gradients is pushed to the neighbors, while the information about the decision variable is pulled from the neighbors. We establish the consensus and derive new convergence rate statements for suboptimality and infeasibility of the generated iterates for solving the bilevel model. The proposed algorithm extends \cite{pushi1_2020} to address a class of bilevel problems. The complexity analysis obtained in this work appears to be new and addresses the aforementioned shortcomings. 

\noindent  \textbf{Notation:} For an integer $m$, the set $\{1,\ldots,m\}$ is denoted as $[m]$. A vector $x$ is assumed to be a column vector (unless otherwise noted) and $x^T$ denotes its transpose. We use $\|x\|_2$ to denote the Euclidean
vector norm of $x$. A continuously differentiable function $f: \mathbb{R}^n\rightarrow \mathbb{R}$ is said to be $\mu_f$--strongly
convex if and only if its gradient mapping is $\mu_f$--strongly monotone, i.e.,  $\left(\nabla f(x) -\nabla f(y)\right)^T(x-y)\geq \mu_f\|x-y\|_2^2$ for any $x,y \in \mathbb{R}^n$.  Also, it is said to be $L_f$--smooth if its gradient mapping is Lipschitz continuous with parameter $L_f>0$, i.e., for any $x, y \in\mathbb{R}^n$, we have
$\|\nabla f(x)-\nabla f(y)\|_2\leq L_f\|x-y\|_2$. We use the following definitions:
\begin{align*}
&\mathbf{x} \triangleq [x_1,\ \ldots,\ x_m]^T , \quad \mathbf{y} \triangleq [y_1,\ \ldots,\ y_m]^T\in \mathbb{R}^{m\times n}\\
&{f}(x)  \triangleq \textstyle\sum\nolimits_{i=1}^m f_i(x), \quad \mathbf{f}(\mathbf{x})  \triangleq \textstyle\sum\nolimits_{i=1}^m f_i(x_i), \\
 &\nabla \mathbf{f}(\mathbf{x}) \triangleq [\nabla f_1(x_1),\ \ldots,\ \nabla f_m(x_m)]^T  \in \mathbb{R}^{m\times n}.
\end{align*}
Analogous definitions apply to functions $g$ and $\mathbf{g}$, and mapping $\nabla \mathbf{g}$. Here, $x_i$ denotes the local copy of the decision vector for agent $i$ and $\mathbf{x}$ includes the local copies of all agents. Vector $y_i$ denotes the auxiliary variable for agent $i$ to track the average of regularized gradient mappings. Throughout, we use the following definition of a matrix norm: Given an arbitrary vector norm $\|\cdot\|$, the induced norm of a matrix $W \in \mathbb{R}^{m \times n}$ is defined as $\|\mathbf{W}\|\triangleq \left\|\left[\left\|\mathbf{W}_{\bullet 1}\right\|,\ldots,\left\|\mathbf{W}_{\bullet n}\right\|\right]\right\|_2$.
\begin{remark}\label{rem:norms}
Under the above definition of matrix norm, it can be seen we have $\|\mathbf{A}\mathbf{x}\| \leq \|\mathbf{A}\|\|\mathbf{x}\|$ for any $\mathbf{A} \in \mathbb{R}^{m \times m}$ and $\mathbf{x} \in \mathbb{R}^{m \times p}$. Also, for any $a \in \mathbb{R}^{m \times 1}$ and $x \in \mathbb{R}^{1 \times n}$, we have $\|ax\|=\|a\|\|x\|_2$.
\end{remark}

\section{Algorithm outline}\label{sec:alg}
To solve the model~\eqref{eqn:bilevel_problem} in directed networks, due to the presence of the inner-level optimization constraints, Lagrangian duality does not seem applicable. Overcoming this challenge calls for new relaxation rules that can tackle the inner-level constraints. We consider a regularization-based relaxation rule. To this end, motivated by the recent success of so-called \textit{iteratively regularized (IR)} algorithms in centralized regimes \cite{FarzadSIOPT20,FarzadMathProg17,FarzadHarshalACC19,FarzadMostafaACC19,HarshalFarzadOptVI2021}, we develop Algorithm \ref{algorithm:IR-push-pull}. Core to the IR framework is the philosophy that the regularization parameter $\lambda_k$ is updated after every step within the algorithm. Here, each agent holds a local copy of the global variable $x$, denoted by $x_{i,k}$, and an auxiliary variable $y_{i,k}$ is used to track the average of a regularized gradient. At each iteration, each agent $i$ uses the $i$th row of two matrices $\mathbf{R} =[R_{ij}]\in\mathbb{R}^{m\times m}$ and $\mathbf{C} =[C_{ij}]\in\mathbb{R}^{m\times m}$ to update vectors $x_{i,k}$ and $y_{i,k}$, respectively. Below, we state the main assumptions on the these two \textit{weight mixing} matrices. 
\begin{assumption}\label{assum:RC}
\noindent (a) The matrix $\mathbf{R}$ is nonnegative, with a strictly positive diagonal, and is row-stochastic, i.e., $\mathbf{R} \mathbf{1} =\mathbf{1}$. 
\noindent (b) The matrix $\mathbf{C}$ is nonnegative, with a strictly positive diagonal, and is column-stochastic, i.e., $ \mathbf{1} ^T\mathbf{C}=\mathbf{1}^T$. 
\noindent (c) The induced digraphs $\mathcal{G}_{\mathbf{R}}$ and  $\mathcal{G}_{\mathbf{C}^T}$ satisfy $\mathcal{R}_{\mathbf{R}}\cap \mathcal{R}_{\mathbf{C}^T}\neq \emptyset$. 
\end{assumption}

\begin{algorithm}
  \caption{Iteratively Regularized Push-Pull}\label{algorithm:IR-push-pull}
    \begin{algorithmic}[1]
    \STATE\textbf{Input:} For all $i \in [m]$, agent $i$ sets step-size $\gamma_{i,0} \geq 0$, pulling weights $R_{ij} \geq 0$ for all $j \in \mathcal{N}_{\mathbf{R}}^{\text{in}}(i)$, pushing weights $C_{ij} \geq 0$ for all $j \in \mathcal{N}_{\mathbf{C}}^{\text{out}}(i)$,  an arbitrary initial point $x_{i,0} \in \mathbb{R}^n$ and $y_{i,0}:=\nabla g_i(x_{i,0})+\lambda_0\nabla f_i(x_{i,0})$; 
    \FOR {$k=0,1,\ldots$}
    		 \STATE For all $i \in [m]$, agent $i$ receives (pulls) the vector $x_{j,k}-\gamma_{j,k}y_{j,k}$ from each agent $j \in \mathcal{N}_{\mathbf{R}}^{\text{in}}(i)$, sends (pushes) $C_{\ell i}y_{i,k}$ to each agent $\ell \in \mathcal{N}_{\mathbf{C}}^{\text{out}}(i)$, and does the following updates:
 $\begin{aligned}& x_{i,k+1} := \textstyle\sum\nolimits_{j=1}^m R_{ij}\left(x_{j,k}-\gamma_{j,k}y_{j,k}\right)\\
 &y_{i,k+1} :=  \textstyle\sum\nolimits_{j=1}^m C_{ij}y_{j,k}+\nabla g_i(x_{i,k+1})\\
      &+\lambda_{k+1}\nabla f_i(x_{i,k+1})
      -\nabla g_i(x_{i,k})-\lambda_{k}\nabla f_i(x_{i,k});\end{aligned}$
   \ENDFOR
   \end{algorithmic}
\end{algorithm}
 Assumption \ref{assum:RC} does not require the strong condition of a doubly stochastic matrix for communication in a directed network. In turn, utilizing a push-pull protocol and in a similar fashion to the recent work~\cite{pushi1_2020}, it only entails a row stochastic $\mathbf{R}$ and a column stochastic matrix $\mathbf{C}$. An example is as follows where agent $i$ chooses scalars $r_i,c_i>0$ and sets $R_{i,j} := 1/\left(\left|\mathcal{N}_{\mathbf{R}}^{\text{in}}(i)\right|+r_i\right)$ for $j \in \mathcal{N}_{\mathbf{R}}^{\text{in}}(i)$, $R_{i,i} :=r_i/\left(\left|\mathcal{N}_{\mathbf{R}}^{\text{in}}(i)\right|+r_i\right)$, $C_{\ell, i} :=1/\left(\left|\mathcal{N}_{\mathbf{C}}^{\text{out}}(i)\right|+c_i\right)$ for $\ell \in \mathcal{N}_{\mathbf{C}}^{\text{out}}(i)$, $C_{i, i} :=c_i/\left(\left|\mathcal{N}_{\mathbf{C}}^{\text{out}}(i)\right|+c_i\right)$, and $0$ otherwise.
Note that Assumption \ref{assum:RC}(c) is weaker than imposing strong connectivity on $\mathcal{G}_{\mathbf{R}}$ and $\mathcal{G}_{\mathbf{C}}$. The update rules in Algorithm \ref{algorithm:IR-push-pull} can be compactly represented as the following:
\begin{align}
\mathbf{x}_{k+1} :=& \mathbf{R}\left(\mathbf{x}_k-\boldsymbol{\gamma}_k\mathbf{y}_k\right),\label{alg:IRPP_compact1}\\
\mathbf{y}_{k+1} := &\mathbf{C}\mathbf{y}_k+\nabla \mathbf{g}(\mathbf{x}_{k+1})+ \lambda_{k+1}\nabla \mathbf{f}(\mathbf{x}_{k+1}) \notag\\
&-\nabla \mathbf{g}(\mathbf{x}_k)  -\lambda_k\nabla \mathbf{f}(\mathbf{x}_k)\label{alg:IRPP_compact2},
\end{align}
where $\boldsymbol{\gamma}_k\geq 0$ is defined as $\boldsymbol{\gamma}_k \triangleq \text{diag}\left(\gamma_{1,k},\ldots,\gamma_{m,k}\right)$.

\section{Preliminaries of convergence analysis}
Under Assumption \ref{assum:RC}, there exists a unique nonnegative left eigenvector $u \in \mathbb{R}^m$ such that $u^T\mathbf{R} = u^T$ and $u^T\mathbf{1} =m$. Similarly, there exists a unique nonnegative right eigenvector $v \in \mathbb{R}^m$ such that $\mathbf{C}v = v$ and $\mathbf{1}^Tv =m$ (cf. Lemma 1 in \cite{pushi1_2020}). Throughout, we use the following definitions 
\begin{definition}\label{eqn:defs}
For $k\geq 0$ and the regularization parameter $\lambda_k>0$, let $x^* \triangleq \argmin_{x \in \argmin g(x)}\{ f(x)\} \in \mathbb{R}^{1\times n}$, $x_{\lambda_k}^*\triangleq \argmin_{x \in \mathbb{R}^n}\left\{ g(x)+\lambda_k f(x)\right\} \in \mathbb{R}^{1\times n}$. We define the mapping $\mathbf{G}_k(\mathbf{x})\triangleq \nabla \mathbf{g}\left(\mathbf{x}\right)+\lambda_k \nabla \mathbf{f}\left(\mathbf{x}\right)\in \mathbb{R}^{m\times n}$, and functions $G_k(\mathbf{x})\triangleq \tfrac{1}{m}\mathbf{1}^T\mathbf{G}_k(\mathbf{x})\in \mathbb{R}^{1\times n}$, $\mathscr{G}_k(x) \triangleq G_k\left(\mathbf{1}x^T\right)\ \in \mathbb{R}^{1\times n}$, $\bar g_k \triangleq \mathscr{G}_k(\bar x_k)\ \in \mathbb{R}^{1\times n}$ where $\bar x_k \triangleq \tfrac{1}{m}u^T\mathbf{x}_k\ \in \mathbb{R}^{1\times n}$. We let $L_k \triangleq L_g+\lambda_kL_f$ and $\bar y_k \triangleq \tfrac{1}{m}\mathbf{1}^T\mathbf{y}_k\ \in \mathbb{R}^{1\times n}$. Lastly, we define $\Lambda_k \triangleq \left|1- \tfrac{\lambda_{k+1}}{\lambda_{k}}\right|$.
\end{definition}

Here, $x^*$ denotes the optimal solution of problem \eqref{eqn:bilevel_problem} and $x^*_\lambda$ is defined as the optimal solution to a regularized problem. Note that the strong convexity of $g(x)+\lambda_kf(x)$ implies that $x^*_{\lambda_k}$ exists and is a unique vector (cf. Proposition 1.1.2 in~\cite{Bertsekas2016}). Also, under Assumption \ref{assum:problem}, the set $\argmin g(x)$ is closed and convex. As such, from the strong convexity of $f$ and invoking Proposition 1.1.2 in~\cite{Bertsekas2016} again, we conclude that $x^*$ also exists and is a unique vector. The sequence $\{x^*_{\lambda_k}\}$ is the so-called \textit{Tikhonov trajectory} and plays a key role in the convergence analysis (cf. \cite{facchinei02finite}). The mapping $\mathbf{G}_k(\mathbf{x})$ denotes the regularized gradient matrix. The vector $\bar x_k$ holds a weighted average of the local copies of the agent's iterates. Next, we consider a family of update rules for the sequences of the step-size and the regularization parameter under which the convergence and rate analysis can be performed. 
\begin{assumption}[Update rules]\label{assum:update_rules}
Assume the step-size $\boldsymbol{\gamma}_k$ and the regularization parameter $\lambda_k$ are updated satisfying: $\hat \gamma_k :=\tfrac{\hat \gamma_0}{(k+1)^a}$ and $\lambda_k :=\tfrac{\lambda_0}{(k+1)^b}$ where $\hat \gamma_k \triangleq \max_{j\in [m]}\gamma_{j,k}$ for $k\geq 0$, and $a$ and $b$ satisfy the following conditions: $0<b<a<1$ and $a+b<1$. Also, let $\alpha_k \geq \theta \hat \gamma_k$ for $k\geq 0$ for some $\theta>0$, where $\alpha_k  \triangleq  \tfrac{1}{m}u^T\boldsymbol{\gamma}_k\nu$.
\end{assumption}
The constant $\theta$ in Assumption \ref{assum:update_rules} measures the size of the range within which the agents in $\mathcal{R}_\mathbf{R}\cap \mathcal{R}_{\mathbf{C}^T}$ select their stepsizes. The condition $\alpha_k \geq \theta \hat \gamma_k$ is satisfied in many cases including the case where all the agents choose strictly positive stepsizes (see Remark 4 in~\cite{pushi1_2020} for more details). In the following lemma, we list some of the main properties of the update rules in Assumption \ref{assum:update_rules} that will be used in the  analysis. 
\begin{lemma}[Properties of the update rules]\label{lem:update_rules_props}
Under Assumption \ref{assum:update_rules}, we have: $\{\lambda_k\}_{k=0}^\infty$ is a decreasing strictly positive sequence satisfying $\lambda_k \to 0$, $\tfrac{\Lambda_k}{\lambda_k} \to 0$,  $\Lambda_{k+1}\leq \Lambda_{k}$ for all $k\geq 0$, $\Lambda_{k-1} \leq \tfrac{1}{k+1}$ for $k\geq 1$, where $\Lambda_k$ is given by Def. \ref{eqn:defs}. Also, $\{\hat \gamma_k\}_{k=0}^\infty$ is a decreasing strictly positive sequence such that $\hat \gamma_k \to 0$ and $\tfrac{\hat \gamma_k}{\lambda_k} \to 0 $. Moreover, for any scalar $\tau>0$, there exists an integer $K_{\tau}$ such that $\tfrac{(k+1)\hat\gamma_k\lambda_k}{k\hat\gamma_{k-1}\lambda_{k-1}}\leq 1+\tau\hat \gamma_k\lambda_k$ for all $k\geq K_{\tau}$.
\end{lemma}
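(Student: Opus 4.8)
The plan is to verify each claim directly from the closed-form expressions $\lambda_k = \lambda_0/(k+1)^b$ and $\hat\gamma_k = \hat\gamma_0/(k+1)^a$, exploiting the sign conditions $0 < b < a < 1$ and $a+b<1$. First, since $a,b>0$ and $(k+1)^a,(k+1)^b$ are strictly increasing in $k$, both $\{\lambda_k\}$ and $\{\hat\gamma_k\}$ are strictly positive, strictly decreasing, and tend to $0$; this settles the monotonicity and vanishing statements. The ratio claim $\hat\gamma_k/\lambda_k \to 0$ is immediate once I write $\hat\gamma_k/\lambda_k = (\hat\gamma_0/\lambda_0)(k+1)^{b-a}$ and use $b-a<0$.

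For the statements on $\Lambda_k = \left|1 - \lambda_{k+1}/\lambda_k\right|$, I would first use the decreasingness of $\{\lambda_k\}$ to drop the absolute value and write $\Lambda_k = 1 - \left(\tfrac{k+1}{k+2}\right)^b$. To obtain $\Lambda_{k-1} \le 1/(k+1)$, I set $x = 1/(k+1) \in (0,1)$ and invoke the elementary inequality $(1-x)^b \ge 1-x$, which holds because $t^b \ge t$ for $t\in(0,1)$ and $b\in(0,1)$; this yields $\Lambda_{k-1} = 1 - (1-x)^b \le x$. The monotonicity $\Lambda_{k+1}\le\Lambda_k$ then follows since $\tfrac{k+1}{k+2} = 1 - \tfrac{1}{k+2}$ increases to $1$ in $k$ and $t\mapsto t^b$ is increasing, so $\left(\tfrac{k+1}{k+2}\right)^b$ increases and $\Lambda_k$ decreases. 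Finally, $\Lambda_k/\lambda_k \to 0$ follows by combining the just-proven bound $\Lambda_k \le 1/(k+2)$ with $1/\lambda_k = (k+1)^b/\lambda_0$, giving $\Lambda_k/\lambda_k \le \lambda_0^{-1}(k+2)^{b-1} \to 0$ since $b<1$.

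The main obstacle is the last claim. Writing $\hat\gamma_k\lambda_k = \hat\gamma_0\lambda_0/(k+1)^{a+b}$ and abbreviating $c = a+b$, the left-hand ratio collapses after cancellation to $(1+1/k)^{1-c}$. Using concavity of $t\mapsto(1+t)^{1-c}$ (so that $(1+t)^{1-c}\le 1 + (1-c)t$, its tangent at $t=0$), I would bound the ratio by $1 + (1-c)/k$, reducing the target inequality to $\tfrac{1-c}{k} \le \tau\hat\gamma_0\lambda_0(k+1)^{-c}$, i.e. to $\tfrac{(k+1)^c}{k} \le \tfrac{\tau\hat\gamma_0\lambda_0}{1-c}$. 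The key point is that $a+b<1$ forces $(k+1)^c/k = O(k^{c-1}) \to 0$, so the right-hand constant is eventually dominated; choosing $K_\tau$ as the threshold beyond which this holds completes the argument. I expect the only delicate aspect to be organizing the exponent bookkeeping so that the condition $a+b<1$ (rather than merely $a,b<1$) is seen to be exactly what is needed.
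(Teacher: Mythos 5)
Your proof is correct and follows essentially the same route as the paper: a direct verification of every claim from the closed forms $\lambda_k=\lambda_0(k+1)^{-b}$ and $\hat\gamma_k=\hat\gamma_0(k+1)^{-a}$, with the last claim reduced to showing that the normalized gap vanishes at rate $k^{-(1-a-b)}$. Two minor points in your favor: your bound $\Lambda_{k-1}=1-(1-x)^b\le x$ via the elementary inequality $(1-x)^b\ge 1-x$ is cleaner than the paper's detour through $b<0.5$ and rationalizing $1-\sqrt{1-x}$, and you explicitly verify $\Lambda_k/\lambda_k\to 0$, a claim the paper's proof leaves implicit.
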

\begin{proof}
Recall that $\hat \gamma_k=\tfrac{\hat \gamma_0}{(k+1)^a}$ and $\lambda_k=\tfrac{ \lambda_0}{(k+1)^b}$ where $0<b<a<1$ and $a+b<1$. Consequently, $\{\hat \gamma_k\}_{k=0}^\infty$ and $\{\lambda_k\}_{k=0}^\infty$ are strictly positive and decreasing sequences and $\hat \gamma_k \to 0$, $\lambda_k \to 0$, and $\tfrac{\hat \gamma_k}{\lambda_k} \to 0 $.  Next, we show that $\Lambda_{k-1} \leq \tfrac{1}{k+1}$ for $k\geq 1$. From Def. \ref{eqn:defs} and that $\lambda_{k}\leq \lambda_{k-1}$, for any $k\geq 1$ we have:
\begin{align}\label{eqn:Lambda_open}
\Lambda_{k-1} =1-\tfrac{\lambda_k}{\lambda_{k-1}}=1-\tfrac{\lambda_0(k+1)^{-b}}{\lambda_0k^{-b}}=1-\left(1-\tfrac{1}{k+1}\right)^b.
\end{align}
From $0<b<a$ and $a+b<1$, we have $b<0.5$. This implies that $\left(1-\tfrac{1}{k+1}\right)^b\geq \left(1-\tfrac{1}{k+1}\right)^{0.5}$. Combining this relation with \eqref{eqn:Lambda_open}, we have:
\begin{align*}
\Lambda_{k-1} &\leq 1-\left(1-\tfrac{1}{k+1}\right)^{0.5}=\tfrac{1-\left(1-\tfrac{1}{k+1}\right)}{1+\sqrt{1-\tfrac{1}{k+1}}}= \tfrac{\tfrac{1}{k+1}}{1+\sqrt{1-\tfrac{1}{k+1}}}\\
&\leq \tfrac{1}{k+1},
\end{align*}
where the last inequality is implied from $k\geq 1$. Next, we show $\Lambda_{k+1}\leq \Lambda_k$ for all $k\geq 0$. From \eqref{eqn:Lambda_open}, we have:
\begin{align*}
\Lambda_{k+1} =1-\left(1-\tfrac{1}{k+3}\right)^b\leq1-\left(1-\tfrac{1}{k+2}\right)^b = \Lambda_{k}.
\end{align*}
Next, we show that for any scalar $\tau>0$, there exists an integer $K_{\tau}$ such that $\tfrac{(k+1)\hat\gamma_k\lambda_k}{k\hat\gamma_{k-1}\lambda_{k-1}}\leq 1+\tau\hat \gamma_k\lambda_k$ for all $k\geq K_{\tau}$. It suffices to show $\lim_{k\to \infty}\left(\tfrac{(k+1)\hat\gamma_k\lambda_k}{k\hat\gamma_{k-1}\lambda_{k-1}}-1\right)\tfrac{1}{\hat \gamma_k \lambda_k}=0$. From update formulas of $\hat \gamma_k$ and $\lambda_k$, we have:
\begin{align*}
&\left(\tfrac{(k+1)\hat\gamma_k\lambda_k}{k\hat\gamma_{k-1}\lambda_{k-1}}-1\right)\tfrac{1}{\hat \gamma_k \lambda_k}=\left(\tfrac{(k+1)}{k}\left(\tfrac{k}{k+1}\right)^{a+b}-1\right)\tfrac{(k+1)^{a+b}}{\hat \gamma_0 \lambda_0}\\
&\leq \left(\left(1+\tfrac{1}{k}\right)^{1-a-b}-1\right)\tfrac{2k^{a+b}}{\hat \gamma_0 \lambda_0}\leq 
\left(1+\tfrac{1}{k}-1\right)\tfrac{2k^{a+b}}{\hat \gamma_0 \lambda_0}\\
&=\tfrac{2}{\hat \gamma_0 \lambda_0k^{1-a-b}}.
\end{align*}
Therefore, $\lim_{k\to \infty}\left(\tfrac{(k+1)\hat\gamma_k\lambda_k}{k\hat\gamma_{k-1}\lambda_{k-1}}-1\right)\tfrac{1}{\hat \gamma_k \lambda_k}=0$, implying the existence of the specified integer $K_{\tau}$.
\end{proof}
Next, we present some key properties of the regularized sequence $\{x^*_{\lambda_k}\}$ that will be used in the rate analysis. 
\begin{lemma}[Properties of Tikhonov trajectory]\label{lemma:IR-props}
Let Assumptions \ref{assum:problem} and \ref{assum:update_rules} hold and $x^*_{\lambda_k}$ be given by Def. \ref{eqn:defs}. Then, we have: (i) The sequence $\{x^*_{\lambda_k}\}$ converges to the unique solution of problem \eqref{eqn:bilevel_problem}, i.e., $x^*$. (ii) There exists a scalar $M>0$ such that for any $k \geq 1$, we have $
\left\|x^*_{\lambda_k}-x^*_{\lambda_{k-1}}\right\|_2 \leq \tfrac{M}{\mu_f}\Lambda_{k-1}$.
\end{lemma}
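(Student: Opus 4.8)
The plan is to treat this as a standard Tikhonov-regularization argument: prove (i) by a limit-point analysis of the trajectory, and prove (ii) by perturbing the first-order optimality condition that characterizes $x^*_{\lambda_k}$. Throughout I would use that $g+\lambda f$ is strongly convex and smooth, so $x^*_\lambda$ is characterized by $\nabla g(x^*_\lambda)+\lambda\nabla f(x^*_\lambda)=0$, and that $\lambda_k\to 0$ by Lemma \ref{lem:update_rules_props}.

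For part (i), I would first compare the objective value $g+\lambda f$ at $x^*_\lambda$ with its value at $x^*$. Since $x^*\in\argmin_x g(x)$ we have $g(x^*)=\min_x g(x)\le g(x^*_\lambda)$, and the optimality inequality then collapses to $f(x^*_\lambda)\le f(x^*)$ for every $\lambda>0$. Because $f$ is strongly convex it is coercive, so this sublevel bound makes $\{x^*_{\lambda_k}\}$ bounded. Rearranging the same inequality and using boundedness of $f$ on this bounded set yields $\limsup_k g(x^*_{\lambda_k})\le\min_x g(x)$, hence $g(x^*_{\lambda_k})\to\min_x g(x)$. For any convergent subsequence $x^*_{\lambda_{k_j}}\to\hat x$, continuity of $g$ forces $\hat x\in\argmin_x g(x)$, while $f(\hat x)\le f(x^*)$ combined with optimality of $x^*$ over $\argmin_x g(x)$ forces $f(\hat x)=f(x^*)$; uniqueness of $x^*$ then gives $\hat x=x^*$. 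As every convergent subsequence has the same limit and the sequence is bounded, the whole trajectory converges to $x^*$.

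For part (ii), I would subtract the optimality conditions at $\lambda_k$ and $\lambda_{k-1}$. Writing $p:=x^*_{\lambda_k}$ and $q:=x^*_{\lambda_{k-1}}$, the crucial algebraic step is to regroup so that the \emph{common} coefficient on the $\nabla f$ increment is $\lambda_{k-1}$ rather than $\lambda_k$, giving $[\nabla g(p)-\nabla g(q)]+\lambda_{k-1}[\nabla f(p)-\nabla f(q)]+(\lambda_k-\lambda_{k-1})\nabla f(p)=0$. Taking the inner product with $p-q$ and using monotonicity of $\nabla g$ (convexity of $g$), strong monotonicity of $\nabla f$ (strong convexity of $f$ with modulus at least $\mu_f$), and Cauchy--Schwarz leads to $\lambda_{k-1}\mu_f\|p-q\|_2\le|\lambda_k-\lambda_{k-1}|\,\|\nabla f(p)\|_2$, i.e. $\|p-q\|_2\le\tfrac{\|\nabla f(x^*_{\lambda_k})\|_2}{\mu_f}\,\tfrac{|\lambda_k-\lambda_{k-1}|}{\lambda_{k-1}}=\tfrac{\|\nabla f(x^*_{\lambda_k})\|_2}{\mu_f}\,\Lambda_{k-1}$, exactly the claimed form. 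Since part (i) shows $\{x^*_{\lambda_k}\}$ is bounded and $\nabla f$ is continuous, $M:=\sup_k\|\nabla f(x^*_{\lambda_k})\|_2<\infty$ completes the bound.

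The main obstacle is getting the correct denominator in part (ii): a naive subtraction that factors out $\lambda_k$ produces $|\lambda_k-\lambda_{k-1}|/\lambda_k$, which is \emph{larger} than $\Lambda_{k-1}=|\lambda_k-\lambda_{k-1}|/\lambda_{k-1}$ since $\lambda_k\le\lambda_{k-1}$, and hence too weak to match the statement. Obtaining $\lambda_{k-1}$ in the denominator requires expanding the perturbation about $\lambda_{k-1}$ and pairing the leftover $(\lambda_k-\lambda_{k-1})$ term with $\nabla f(p)$, as above. A secondary point to handle carefully is that $M$ must be a single constant uniform in $k$, which is precisely why part (i) is proved first: it guarantees the trajectory, and therefore $\|\nabla f(x^*_{\lambda_k})\|_2$, remains bounded.
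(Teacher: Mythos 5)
Your proposal is correct. Part (i) is essentially identical to the paper's argument: the same chain $g(x^*_{\lambda_k})\geq g(x^*)$ plus optimality of $x^*_{\lambda_k}$ for $g+\lambda_k f$ yields $f(x^*_{\lambda_k})\leq f(x^*)$, coercivity gives boundedness, and a subsequence/continuity/uniqueness argument closes the loop. Part (ii) takes a genuinely different (though closely related) route. The paper works at the level of function values: it writes the strong-convexity inequalities $g(x)+\lambda f(x)\geq g(x^*_\lambda)+\lambda f(x^*_\lambda)+\tfrac{\lambda\mu_f}{2}\|x-x^*_\lambda\|_2^2$ at both $\lambda_k$ and $\lambda_{k-1}$, cross-substitutes and adds to get $(\lambda_{k-1}-\lambda_k)\bigl(f(x^*_{\lambda_k})-f(x^*_{\lambda_{k-1}})\bigr)\geq \tfrac{\mu_f(\lambda_k+\lambda_{k-1})}{2}\|x^*_{\lambda_{k-1}}-x^*_{\lambda_k}\|_2^2$, and then applies the convexity gradient inequality for $f$ and Cauchy--Schwarz; this yields the bound with constant $\tfrac{2}{\mu_f}\Lambda_{k-1}\|\nabla f(x^*_{\lambda_k})\|_2$. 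You instead subtract the two stationarity conditions and regroup the $\nabla f$ terms with coefficient $\lambda_{k-1}$, then use monotonicity of $\nabla g$ and strong monotonicity of $\nabla f$; your key regrouping plays exactly the role that the paper's cross-substitution-and-add step plays, and it lands you at the same $\Lambda_{k-1}/\mu_f$ structure with a constant that is a factor of $2$ better. Two minor points: your division by $\|p-q\|_2$ needs the trivial case $x^*_{\lambda_k}=x^*_{\lambda_{k-1}}$ dispatched separately (the paper does this explicitly), and the uniformity of $M$ via boundedness of the trajectory and continuity of $\nabla f$ is handled exactly as in the paper. Neither point affects correctness.
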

\begin{proof}
\noindent \textbf{(a)} From Assumption \ref{assum:problem}, the function $g(x)+\lambda_kf(x)$ is $(\lambda_k\mu_f)$-strongly convex. Since $x^*_{\lambda_k}$ is the minimizer of this regularized function, we have:
	\begin{align}\label{eqn:x_*_x_lambda_relation0}
	g(x^*)+\lambda_kf(x^*)\geq g(x^*_{\lambda_k})+\lambda_kf(x^*_{\lambda_k})+\tfrac{\lambda_k\mu_f\left\|x^*-x^*_{\lambda_k}\right\|^2_2}{2}.
	\end{align} 
Also, the definition of $x^*$ in Def. \ref{eqn:defs} implies that $x^*\in \argmin g(x)$ and so $g(x^*_{\lambda_k})\geq 	g(x^*)$. From the preceding two inequalities, we obtain $
f(x^*) \geq f\left(x^*_{\lambda_k}\right)$ for all $k \geq 0$.  From this and the coercivity of $f$ (due to its strong convexity), the sequence $\{x^*_{\lambda_k}\}$ is bounded implying that it must have at least one limit point. Let $\{x^*_{\lambda_k}\}_{k \in \mathcal{K}}$ be an arbitrary subsequence such that $\lim_{k \to \infty, \ k \in \mathcal{K}}x^*_{\lambda_k} = \hat x$. Next, we show that $\hat x$ is a feasible solution to problem \eqref{eqn:bilevel_problem}. Consider relation \eqref{eqn:x_*_x_lambda_relation0}. Taking the limit from both sides of \eqref{eqn:x_*_x_lambda_relation0}, using the continuity of $g$ and $f$, invoking $\lim_{k \to \infty} \lambda_k =0$, and neglecting the nonnegative term, we obtain that:
\begin{align*}
g\left(x^* \right)  \geq  g\left(\lim_{k\rightarrow \infty, \ k \in \mathcal{K}} x^*_{\lambda_k} \right) = g(\hat x),
\end{align*}
implying that $\hat x$ is a minimizer of $g(x)$ and so it is a feasible solution to problem \eqref{eqn:bilevel_problem}. Next, we show that $\hat x$ is the optimal solution of \eqref{eqn:bilevel_problem}. Taking the limit from both sides of $
f(x^*) \geq f\left(x^*_{\lambda_k}\right)$ and using the continuity of $f$ we obtain:
\begin{align*}
f\left(x^* \right)  \geq  f\left(\lim_{k\rightarrow \infty, \ k \in \mathcal{K}} x^*_{\lambda_k} \right) = f(\hat x).
\end{align*}
 Hence, from the uniqueness of $x^*$ we conclude that all limit points of the Tikhonov trajectory are identical to $x^*$. Therefore, $\lim_{k\to \infty}x^*_{\lambda_k}=x^*$.

		\noindent \textbf{(b)} If $x^*_{\lambda_k} = x^*_{\lambda_{k-1}}$, the desired relation holds. Consider the case where $x^*_{\lambda_k} \neq x^*_{\lambda_{k-1}}$ for $k\geq 1$. Writing the optimality conditions for $x^*_{\lambda_k}$  and $x^*_{\lambda_{k-1}}$, for any $x,y \in \mathbb{R}^n$, we have:
	\begin{align*}
	&g(x)+\lambda_{k-1}f(x)\geq g\left(x^*_{\lambda_{k-1}}\right)+\lambda_{k-1}f\left(x^*_{\lambda_{k-1}}\right)\\
	&+\tfrac{\lambda_{k-1}\mu_f}{2}\left\|x-x^*_{\lambda_{k-1}}\right\|^2_2.
	\\	
	&g(y)+\lambda_kf(y)\geq g\left(x^*_{\lambda_k}\right)+\lambda_kf\left(x^*_{\lambda_k}\right)+\tfrac{\lambda_k\mu_f}{2}\left\|y-x^*_{\lambda_k}\right\|^2_2.
	\end{align*} 
	By substituting $x:=x^*_{\lambda_k}$ and $y:=x^*_{\lambda_{k-1}}$ and  adding the resulting two relations together, we have: 
		\begin{align*}
	&\left(\lambda_{k-1}-\lambda_{k}\right)\left(f\left(x^*_{\lambda_k}\right)-f\left(x^*_{\lambda_{k-1}}\right)\right)\\
	&\geq \tfrac{\mu_f\left(\lambda_k+\lambda_{k-1}\right)}{2}\left\|x^*_{\lambda_{k-1}}-x^*_{\lambda_k}\right\|^2_2. 
	\end{align*}
From the convexity property of $f$, we have that: 
\begin{align*}
f\left(x^*_{\lambda_k}\right)-f\left(x^*_{\lambda_{k-1}}\right)	\leq \left(x^*_{\lambda_k}-x^*_{\lambda_{k-1}}\right)^T\nabla f\left(x^*_{\lambda_{k}}\right).
\end{align*}	
	From the last two inequalities and $\lambda_{k-1}>\lambda_k$, we obtain:
		\begin{align*}
 \tfrac{\mu_f\lambda_{k-1}}{2}\left\|x^*_{\lambda_{k-1}}-x^*_{\lambda_k}\right\|^2_2 &\leq \left(\lambda_{k-1}-\lambda_{k}\right)\left(x^*_{\lambda_k}-x^*_{\lambda_{k-1}}\right)^T\\
 &\nabla f\left(x^*_{\lambda_{k}}\right). 
	\end{align*}
Dividing both sides of the by $\lambda_{k-1}$ and using Cauchy-Schwarz inequality, we have:
		\begin{align*}
 \left\|x^*_{\lambda_{k-1}}-x^*_{\lambda_k}\right\|^2_2 &\leq \tfrac{2}{\mu_f} \left(1-\tfrac{\lambda_{k}}{\lambda_{k-1}}\right)\left\|x^*_{\lambda_k}-x^*_{\lambda_{k-1}}\right\|_2\\
 &\left\|\nabla f\left(x^*_{\lambda_{k}}\right)\right\|_2. 
	\end{align*}	
	Note that since $x^*_{\lambda_k} \neq x^*_{\lambda_{k-1}}$, we have $\left\|x^*_{\lambda_k}-x^*_{\lambda_{k-1}}\right\|_2 \neq 0$. Thus, we obtain: 
			\begin{align*}
 \left\|x^*_{\lambda_{k-1}}-x^*_{\lambda_k}\right\|_2 &\leq \tfrac{2}{\mu_f} \left(1-\tfrac{\lambda_{k}}{\lambda_{k-1}}\right)\left\|\nabla f\left(x^*_{\lambda_{k}}\right)\right\|_2 \\
 &=\tfrac{2\Lambda_{k-1}}{\mu_f} \left\|\nabla f\left(x^*_{\lambda_{k}}\right)\right\|_2 . 
	\end{align*}	
From part (a), $\{x^*_{\lambda_t}\}_{t=0}^\infty$ is a bounded sequence. Thus, there exists a compact ball $\mathcal{X} \subset \mathbb{R}^n$ such that $\{x^*_{\lambda_t}\}_{t=0}^\infty \subseteq \mathcal{X} $. From continuity of the mapping $\nabla f$, there exists a constant $M>0$ such that $
	2\left\|\nabla f\left(x^*_{\lambda_k}\right)\right\|_2\leq M$ for all $k \geq 0$. Combining the preceding two relations, we obtain the desired inequality. 
\end{proof}
In the following, we state the properties of the regularized maps to be used in finding error bounds in the next section. 
\begin{lemma}\label{lemma:grad_track_props}
Consider Algorithm \ref{algorithm:IR-push-pull}. Let Assumptions \ref{assum:problem} and \ref{assum:RC} hold. For any $k\geq 0$, mappings $G_k$, $\mathscr{G}_k$, and $\bar g_k$ given by Def. \ref{eqn:defs} satisfy the following relations: (i) We have that $\bar y_k = G_k(\mathbf{x}_k)$. (ii) We have $\mathscr{G}_k\left(x^*_{\lambda_k}\right) = 0$. (iii)  The mapping $\mathscr{G}_k(x)$ is  $(\lambda_k\mu_f)$-strongly monotone and Lipschitz continuous with parameter $L_k$. (iv) We have $\|\bar y_k - \bar g_k \|_2\leq \tfrac{L_k}{\sqrt{m}}\left\|\mathbf{x}_k-\mathbf{1}\bar x_k\right\|_2$ and $\|\bar g_k \|_2\leq L_k\|\bar x_k-x^*_{\lambda_k}\|_2$.
\end{lemma}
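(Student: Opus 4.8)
The plan is to establish the four claims in sequence, since parts (ii)--(iii) feed into part (iv).

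For part (i), I would prove by induction on $k$ that $\mathbf{1}^T\mathbf{y}_k = \mathbf{1}^T\mathbf{G}_k(\mathbf{x}_k)$, which upon dividing by $m$ gives $\bar y_k = G_k(\mathbf{x}_k)$. The base case $k=0$ is immediate from the initialization $y_{i,0} = \nabla g_i(x_{i,0}) + \lambda_0\nabla f_i(x_{i,0})$, i.e. $\mathbf{y}_0 = \mathbf{G}_0(\mathbf{x}_0)$ in stacked form. For the inductive step I would left-multiply the update \eqref{alg:IRPP_compact2} by $\mathbf{1}^T$ and invoke the column-stochasticity $\mathbf{1}^T\mathbf{C} = \mathbf{1}^T$ of Assumption \ref{assum:RC}(b); then $\mathbf{1}^T\mathbf{C}\mathbf{y}_k = \mathbf{1}^T\mathbf{y}_k$ cancels against the subtracted term $-\mathbf{1}^T\mathbf{G}_k(\mathbf{x}_k)$ by the inductive hypothesis, leaving exactly $\mathbf{1}^T\mathbf{G}_{k+1}(\mathbf{x}_{k+1})$.

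For parts (ii) and (iii), the key step is to rewrite $\mathscr{G}_k$ explicitly. Since $\mathbf{1}x^T$ stacks $m$ identical copies of $x$, I would compute $\mathbf{1}^T\mathbf{G}_k(\mathbf{1}x^T) = \sum_{i=1}^m(\nabla g_i(x)+\lambda_k\nabla f_i(x))^T = (\nabla g(x)+\lambda_k\nabla f(x))^T$, so that $\mathscr{G}_k(x) = \tfrac{1}{m}(\nabla g(x)+\lambda_k\nabla f(x))^T$, i.e. $\mathscr{G}_k$ is $\tfrac{1}{m}$ times the transposed gradient of the aggregate regularized objective $\phi_k := g+\lambda_k f$. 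Claim (ii) then follows from the first-order optimality condition $\nabla\phi_k(x^*_{\lambda_k})=0$. For claim (iii), I would note that $f=\sum_i f_i$ is $m\mu_f$-strongly convex and $mL_f$-smooth while $g=\sum_i g_i$ is convex and $mL_g$-smooth, so $\nabla\phi_k$ is $\lambda_k m\mu_f$-strongly monotone and $mL_k$-Lipschitz; the prefactor $\tfrac{1}{m}$ rescales these precisely to $\lambda_k\mu_f$ and $L_k$.

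For part (iv), I would write $\bar y_k - \bar g_k = \tfrac{1}{m}\mathbf{1}^T(\mathbf{G}_k(\mathbf{x}_k) - \mathbf{G}_k(\mathbf{1}\bar x_k))$ using part (i) together with $\bar g_k = \mathscr{G}_k(\bar x_k) = G_k(\mathbf{1}\bar x_k)$, expand row-by-row, and apply the per-agent Lipschitz bound $\|(\nabla g_i+\lambda_k\nabla f_i)(x_{i,k}) - (\nabla g_i+\lambda_k\nabla f_i)(\bar x_k)\|_2 \leq L_k\|x_{i,k}-\bar x_k\|_2$ to get $\|\bar y_k - \bar g_k\|_2 \leq \tfrac{L_k}{m}\sum_i\|x_{i,k}-\bar x_k\|_2$; a Cauchy--Schwarz step converts this sum into $\sqrt{m}\,\|\mathbf{x}_k-\mathbf{1}\bar x_k\|_2$, yielding the stated constant $\tfrac{L_k}{\sqrt{m}}$. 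The second bound is shortest: combining (ii) and (iii) gives $\|\bar g_k\|_2 = \|\mathscr{G}_k(\bar x_k)-\mathscr{G}_k(x^*_{\lambda_k})\|_2 \leq L_k\|\bar x_k - x^*_{\lambda_k}\|_2$. The routine parts are the telescoping in (i) and the Lipschitz estimates in (iv); the one place demanding care is bookkeeping the $\tfrac{1}{m}$ averaging against the $m$-fold summation inside $g$ and $f$ so that the constants in (iii) come out independent of $m$, and correctly using the column-stacked matrix norm of Remark \ref{rem:norms} so that the passage from the $\ell_1$-type sum of agent norms to $\|\mathbf{x}_k-\mathbf{1}\bar x_k\|_2$ in (iv) supplies exactly the $\sqrt{m}$.
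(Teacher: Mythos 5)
Your proposal is correct and follows essentially the same route as the paper's own proof: part (i) by telescoping the $\mathbf{1}^T$-averaged update via column-stochasticity, parts (ii)--(iii) by identifying $\mathscr{G}_k(x)=\tfrac{1}{m}\left(\nabla g(x)+\lambda_k\nabla f(x)\right)$ and rescaling the monotonicity/Lipschitz constants, and part (iv) by the per-agent Lipschitz bound followed by Cauchy--Schwarz and by combining (ii) with (iii). Your explicit bookkeeping of the $\tfrac{1}{m}$ factor against the $m$-fold sums is, if anything, slightly more careful than the paper's terse statement of (iii).
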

\begin{proof}
\noindent \textbf{(i)} Multiplying both sides of \eqref{alg:IRPP_compact2} by $\tfrac{1}{m}\mathbf{1}^T$ and from the definitions of $\mathbf{G}_k$ and $G_k$ in Def. \ref{eqn:defs}, we obtain: 
\begin{align*}
\bar y_{k+1}&=\tfrac{1}{m}\mathbf{1}^T\mathbf{y}_{k} +  \tfrac{1}{m}\mathbf{1}^T\mathbf{G}_{k+1}(\mathbf{x}_{k+1}) - \tfrac{1}{m}\mathbf{1}^T\mathbf{G}_{k}(\mathbf{x}_{k})\\
&=\bar y_{k} +G_{k+1}(\mathbf{x}_{k+1})-G_{k}(\mathbf{x}_{k}),
\end{align*}
where we used $\mathbf{1}^T\mathbf{C} = \mathbf{1}^T$.
From Algorithm \ref{algorithm:IR-push-pull}, we have $\mathbf{y}_0 : = \nabla \mathbf{g}(\mathbf{x}_0) + \lambda_0 \nabla \mathbf{f}(\mathbf{x}_0)=\mathbf{G}_0(\mathbf{x}_0)$, implying that $\bar y_{0}=G_{0}(\mathbf{x}_{0})$. From the two preceding relations, we obtain that $\bar y_k = G_k\left(\mathbf{x}_k\right)$.

\noindent \textbf{(ii, iii)}
From Def. \ref{eqn:defs}, we have that $G_k(\mathbf{x}) = \tfrac{1}{m}\textstyle\sum_{i=1}^m\left(\nabla g_i\left(x_{i,k}\right)+\lambda_k\nabla f_i\left(x_{i,k}\right)\right)$. Thus, from the definition of $\mathscr{G}_k$ we obtain that $\mathscr{G}_k(x) = G_k\left(\mathbf{1}x^T\right) =  \tfrac{1}{m}\textstyle\sum_{i=1}^m\left(\nabla g_i\left(x\right)+\lambda_k\nabla f_i\left(x\right)\right) = \tfrac{1}{m}\left(\nabla g(x) +\lambda_k \nabla f(x)\right)$. Thus, from the definition of $x^*_{\lambda_k}$ in Def. \ref{eqn:defs}, we obtain $\mathscr{G}_k\left(x^*_{\lambda_k}\right) = 0$. Also, from Assumption \ref{assum:problem}, we conclude that  $\mathscr{G}_k(x)$ is a $(\lambda_k\mu_f)$-strongly monotone mapping and Lipschitz continuous with parameter $L_k \triangleq L_g+\lambda_kL_f$ for $k\geq 0$. 

\noindent \textbf{(iv)} For any $\mathbf{u},\mathbf{v} \in \mathbb{R}^{m\times n}$, with $u_i,v_i \in \mathbb{R}^n$ denoting the $i^{\text{th}}$ row of $\mathbf{u},\mathbf{v}$, respectively, we have:
\begin{align*}
&\left\|G_k(\mathbf{u}) - G_k(\mathbf{v})\right\|_2 = \left\|\tfrac{1}{m}\mathbf{1}^T\left(\nabla \mathbf{g}(\mathbf{u})  +\lambda_k\nabla \mathbf{f}(\mathbf{u})\right) \right.\\
&\left.- \tfrac{1}{m}\mathbf{1}^T\left(\nabla \mathbf{g}\left(\mathbf{v}\right)+\lambda_k \nabla \mathbf{f}\left(\mathbf{v}\right)\right)\right\|_2\\
&  \leq \tfrac{1}{m}\left\|\textstyle\sum_{i=1}^m\nabla g_i(u_{i})-\textstyle\sum_{i=1}^m \nabla g_i\left(v_i\right)\right\|_2\\
&+\tfrac{\lambda_k}{m}\left\|\textstyle\sum_{i=1}^m\nabla f_i(u_i)-\textstyle\sum_{i=1}^m \nabla f_i\left(v_i\right)\right\|_2\\
& \leq \tfrac{1}{m}\textstyle\sum_{i=1}^m\left(\left\|\nabla g_i(u_i)- \nabla g_i\left(v_i\right)\right\|_2\right.\\
&\left.+\lambda_k\left\|\nabla f_i(u_i)- \nabla f_i\left(v_i\right)\right\|_2\right)\\
&\leq \tfrac{1}{m}\textstyle\sum_{i=1}^m\left(L_{g}\|u_i-v_i\|_2+\lambda_kL_{f}\|u_i-v_i\|_2\right) \\
&\leq \tfrac{L_k}{m}\textstyle\sum_{i=1}^m\|u_i-v_i\|_2\leq \tfrac{L_k}{\sqrt{m}}\|\mathbf{u}-\mathbf{v}\|_2.
\end{align*}
Consequently, we obtain $\|\bar y_k - \bar g_k\|_2 = \left\|G_k(\mathbf{x}_k)-G_k(\mathbf{1}\bar x_k)\right\|_2\leq \tfrac{L_k}{\sqrt{m}}\|\mathbf{x}_k-\mathbf{1}\bar x_k\|_2$. Also, using the Lipschitzian property of $\mathscr{G}_k$ in part (ii) and $\mathscr{G}_k\left(x^*_{\lambda_k}\right) = 0$, we obtain: \begin{align*}\|\bar g_k\|_2&=\left\|\mathscr{G}_k(\bar x_k)\right\|_2= \left\|\mathscr{G}_k(\bar x_k)-\mathscr{G}_k\left(x^*_{\lambda_k}\right)\right\|_2\\
&\leq L_k\|\bar x_k-x^*_{\lambda_k}\|_2.
\end{align*}
\end{proof}
We state the following result from \cite{pushi1_2020} introducing two matrix norms induced by matrices $\mathbf{R}$ and $\mathbf{C}$.
\begin{lemma}[cf. Lemma 4 and Lemma 6 in \cite{pushi1_2020}]\label{lemma:norms_relations}
Let Assumption \ref{assum:RC} hold. Then: (i) There exist matrix norms $\|\cdot\|_{\mathbf{R}}$ and  $\|\cdot\|_{\mathbf{C}}$ such that for $\sigma_{\mathbf{R}}\triangleq \left\|\mathbf{R}-\tfrac{\mathbf{1}u^T}{m}\right\|_{\mathbf{R}}$ and $\sigma_{\mathbf{C}}\triangleq \left\|\mathbf{C}-\tfrac{\mathbf{1}v^T}{m}\right\|_{\mathbf{C}}$ we have that $\sigma_{\mathbf{R}}<1$ and $\sigma_{\mathbf{C}}<1$. (ii) There exist scalars $\delta_{\mathbf{R},2},\delta_{\mathbf{C},2},\delta_{\mathbf{R},\mathbf{C}},\delta_{\mathbf{C},\mathbf{R}}>0$ such that for any $W \in \mathbb{R}^{m \times n}$, we have $\|W\|_{\mathbf{R}}\leq \delta_{\mathbf{R},2}\|W\|_2$, $\|W\|_{\mathbf{C}}\leq \delta_{\mathbf{C},2}\|W\|_2$, $\|W\|_{\mathbf{R}}\leq \delta_{\mathbf{R},\mathbf{C}}\|W\|_\mathbf{C}$, $\|W\|_{\mathbf{C}}\leq \delta_{\mathbf{C},\mathbf{R}}\|W\|_\mathbf{R}$, $\|W\|_{2}\leq \|W\|_\mathbf{R}$, and $\|W\|_{2}\leq \|W\|_\mathbf{C}$.
\end{lemma}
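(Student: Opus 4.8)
The plan is to reduce both claims to two classical facts: a spectral localization statement for the perturbed mixing matrices, and Householder's lemma on the existence of a vector norm whose induced operator norm approximates the spectral radius. For part (i), I would show that $\mathbf{M}_{\mathbf{R}}\triangleq \mathbf{R}-\tfrac{\mathbf{1}u^T}{m}$ and the analogous matrix $\mathbf{M}_{\mathbf{C}}$ built from the right Perron eigenvector $v$ both have spectral radius strictly less than one, and then invoke Householder's lemma to manufacture the norms $\|\cdot\|_{\mathbf{R}}$ and $\|\cdot\|_{\mathbf{C}}$ under which $\sigma_{\mathbf{R}},\sigma_{\mathbf{C}}<1$. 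Part (ii) then follows from the equivalence of norms on the finite-dimensional space $\mathbb{R}^{m\times n}$, together with a harmless rescaling of the constructed norms to enforce domination of $\|\cdot\|_2$.

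For the spectral radius bound, I would first record that $P\triangleq \tfrac{\mathbf{1}u^T}{m}$ is the spectral projection onto the eigenspace of the eigenvalue $1$ of $\mathbf{R}$: it is idempotent (since $u^T\mathbf{1}=m$), and using $\mathbf{R}\mathbf{1}=\mathbf{1}$ and $u^T\mathbf{R}=u^T$ one checks $\mathbf{R}P=P\mathbf{R}=P$. Under Assumption \ref{assum:RC}, the positive-diagonal and spanning-tree-root conditions guarantee (via Perron--Frobenius theory, exactly as in \cite{pushi1_2020}) that $1$ is a simple eigenvalue of $\mathbf{R}$ and every other eigenvalue has modulus strictly below one; the uniqueness of the nonnegative left eigenvector $u$ stated above already encodes this simplicity. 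Consequently $\mathbf{R}-P=\mathbf{R}(\mathbf{I}-P)$ acts as $0$ on $\mathrm{range}(P)$ and as $\mathbf{R}$ on the complementary invariant subspace $\mathrm{range}(\mathbf{I}-P)$, so its spectrum is $\{0\}$ together with the eigenvalues of $\mathbf{R}$ other than $1$; hence $\rho(\mathbf{M}_{\mathbf{R}})<1$. An identical argument with the right eigenvector $v$ of the column-stochastic $\mathbf{C}$ gives $\rho(\mathbf{M}_{\mathbf{C}})<1$. Householder's lemma then supplies, for each matrix and any tolerance $\varepsilon$ with $\rho(\cdot)+\varepsilon<1$, a vector norm whose induced matrix norm bounds the matrix by $\rho(\cdot)+\varepsilon<1$; lifting these vector norms to $\mathbb{R}^{m\times n}$ through the column-wise matrix-norm construction of the Notation section (under which the action of an $m\times m$ matrix decouples across columns) yields $\|\cdot\|_{\mathbf{R}}$ and $\|\cdot\|_{\mathbf{C}}$ with $\sigma_{\mathbf{R}},\sigma_{\mathbf{C}}<1$.

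For part (ii), every pair of the norms $\|\cdot\|_2$, $\|\cdot\|_{\mathbf{R}}$, $\|\cdot\|_{\mathbf{C}}$ is equivalent because the ambient space is finite dimensional, which directly produces the comparison constants $\delta_{\mathbf{R},2},\delta_{\mathbf{C},2},\delta_{\mathbf{R},\mathbf{C}},\delta_{\mathbf{C},\mathbf{R}}$. The two normalized inequalities $\|W\|_2\le\|W\|_{\mathbf{R}}$ and $\|W\|_2\le\|W\|_{\mathbf{C}}$ I would obtain by rescaling: if $\|W\|_2\le c\|W\|_{\mathbf{R}}$ for some $c>0$, I replace $\|\cdot\|_{\mathbf{R}}$ by $c\|\cdot\|_{\mathbf{R}}$, which is still a vector norm and, crucially, leaves the induced operator norm (hence $\sigma_{\mathbf{R}}$) unchanged, so the constant is absorbed without disturbing part (i).

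\textbf{Main obstacle.} The only nonroutine ingredient is the spectral localization: that Assumption \ref{assum:RC}(c) --- strictly weaker than strong connectivity of $\mathcal{G}_{\mathbf{R}}$ and $\mathcal{G}_{\mathbf{C}}$ --- still forces the eigenvalue $1$ to be simple and strictly dominant. Establishing this rests on the Perron--Frobenius/graph-theoretic analysis of the spanning-tree-root sets $\mathcal{R}_{\mathbf{R}}$ and $\mathcal{R}_{\mathbf{C}^T}$, with the positive diagonals ruling out other unimodular eigenvalues. Since the statement is quoted as Lemmas 4 and 6 of \cite{pushi1_2020}, I would cite that spectral fact directly and devote the argument to the eigenvalue-shifting identity $\mathbf{R}-P=\mathbf{R}(\mathbf{I}-P)$, the Householder construction, and the rescaling step above.
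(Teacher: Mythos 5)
The paper does not prove this lemma at all: it is imported verbatim from Lemmas 4 and 6 of the cited Push--Pull reference, so there is no in-paper argument to compare against. Your reconstruction is correct and is essentially the standard proof from that source --- the projection identities $\mathbf{R}P=P\mathbf{R}=P=P^2$ giving $\rho\left(\mathbf{R}-\tfrac{\mathbf{1}u^T}{m}\right)<1$, Householder's lemma to manufacture the vector norms, the column-wise lift to $\mathbb{R}^{m\times n}$, finite-dimensional norm equivalence for the constants $\delta$, and the rescaling (which leaves the induced operator norm, hence $\sigma_{\mathbf{R}}$ and $\sigma_{\mathbf{C}}$, unchanged) to absorb the comparison with $\|\cdot\|_2$. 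The one step you gesture at rather than prove --- that the positive diagonal plus the spanning-tree-root condition of Assumption 2(c) forces $1$ to be a simple, strictly dominant eigenvalue (mere uniqueness of the normalized nonnegative eigenvector $u$ does not by itself deliver algebraic simplicity or exclude other unimodular eigenvalues) --- is exactly the content of the cited reference's spectral lemma, and deferring to it is appropriate given that the statement itself is quoted.
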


\section{Convergence and rate analysis}
We analyze the convergence of Algorithm \ref{algorithm:IR-push-pull} by introducing the errors metrics $\left\|\bar x_{k+1}-x^*_{\lambda_k}\right\|_2$,  $\|\mathbf{x}_{k+1}-\mathbf{1}\bar x_{k+1}\|_{\mathbf{R}}$, $\left\|\mathbf{y}_{k+1} - \nu \bar y_{k+1}\right\|_{\mathbf{C}}$. Of these, the first term relates the averaged iterate with the Tikhonov trajectory, the second term measures the consensus violation for the decision matrix, and the third term measures the consensus violation for the matrix of the regularized gradients.  For $k\geq 1$, let us define $\Delta_k$ as  $\Delta_k \triangleq \left[\|\bar x_k-x^*_{\lambda_{k-1}}\|_2, \left\|\mathbf{x}_k-\mathbf{1}\bar x_k\right\|_{\mathbf{R}}, \left\|\mathbf{y}_k-\nu \bar y_k\right\|_{\mathbf{C}}\right]^T$. 
\begin{proposition}\label{prop:main_ineq_1} 
Consider Algorithm \ref{algorithm:IR-push-pull} under Assumptions \ref{assum:problem}, \ref{assum:RC}, and \ref{assum:update_rules}. Let $\alpha_k $ and $\hat\gamma_k$ be given by Assumption \ref{assum:update_rules}, and $c_0\triangleq \delta_{\mathbf{C},2}\left\|\mathbf{I}-\tfrac{1}{m}\nu\mathbf{1}^T\right\|_\mathbf{C}$. Then, there exist scalars $M>0$, $B_\mathbf{g}>0$, and an integer $K$ such that for any $k\geq K$, we have $\Delta_{k+1} \leq H_k\Delta_k+h_k$ where $H_k =[H_{ij,k}]_{3\times 3}$ and $h_k =[h_{i,k}]_{3\times 1}$ are given as follows:
\begingroup
\allowdisplaybreaks
\begin{align*}
&H_{11,k} :=\textstyle{ 1-\mu_f\alpha_k  \lambda_k}, \ 
H_{12,k} := \textstyle{\tfrac{\alpha_kL_k}{\sqrt{m}}}, \ 
H_{13,k} :=\textstyle{\tfrac{\hat\gamma_k\|u\|_2}{m}},\\
&H_{21,k} :=  \textstyle{\sigma_{\mathbf{R}}\hat\gamma_k L_k\|\nu\|_{\mathbf{R}}},\ 
H_{22,k} := \textstyle{\sigma_{\mathbf{R}}\left(1+\hat\gamma_k\|\nu\|_{\mathbf{R}}\tfrac{L_k}{\sqrt{m}}\right)} ,\\ 
&H_{23,k} :=  \textstyle{\sigma_{\mathbf{R}}\hat\gamma_k\delta_{\mathbf{R},\mathbf{C}}}, \ 
H_{33,k} :=\textstyle{\sigma_{\mathbf{C}} +c_0L_{k}\hat\gamma_k \|\mathbf{R}\|_2},\\
&H_{31,k} := \textstyle{c_0L_{k}\left(\hat\gamma_k\|\mathbf{R}\|_2 \|\nu\|_2L_k+2\sqrt{m}\Lambda_{k}\right)},\\
&H_{32,k} := \textstyle{c_0L_{k}\left(\left\| \mathbf{R}-\mathbf{I}\right\|_2 +\hat\gamma_k\|\mathbf{R}\| \|\nu\|_2 \tfrac{L_k}{\sqrt{m}}   + 2\Lambda_{k}\right)},\\
&h_{1,k}:=\textstyle{\tfrac{M\Lambda_{k-1}}{\mu_f}}, \ 
h_{2,k}:=\textstyle{\tfrac{M\sigma_{\mathbf{R}}\hat\gamma_k L_k\|\nu\|_{\mathbf{R}}}{\mu_f}\Lambda_{k-1}},\\
 & h_{3,k}:= \textstyle{c_0L_{k}\left(\hat\gamma_k\|\mathbf{R}\|_2 \|\nu\|_2 L_k+\sqrt{m}\Lambda_{k}+\tfrac{\mu_f c_0B_{\mathbf{g}}}{M}\right) \tfrac{M\Lambda_{k-1}}{\mu_f}}.
\end{align*}
\endgroup
\end{proposition}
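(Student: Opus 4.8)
The plan is to prove the three scalar inequalities that constitute the rows of $\Delta_{k+1}\le H_k\Delta_k+h_k$ separately, and then read off the entries of $H_k$ and $h_k$ by collecting coefficients. Throughout I would exploit the eigenvector identities $u^T\mathbf{R}=u^T$, $\mathbf{1}^T\mathbf{C}=\mathbf{1}^T$, $\mathbf{R}\mathbf{1}=\mathbf{1}$, $\mathbf{C}\nu=\nu$, which give $\bar x_{k+1}=\bar x_k-\tfrac{1}{m}u^T\boldsymbol{\gamma}_k\mathbf{y}_k$ and (from Lemma \ref{lemma:grad_track_props}(i)) $\bar y_{k+1}=\bar y_k+G_{k+1}(\mathbf{x}_{k+1})-G_k(\mathbf{x}_k)$, together with the norm relations of Remark \ref{rem:norms} and Lemmas \ref{lemma:IR-props}--\ref{lemma:norms_relations}. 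For the first coordinate I would start from $\bar x_{k+1}-x^*_{\lambda_k}=(\bar x_k-x^*_{\lambda_k})-\tfrac{1}{m}u^T\boldsymbol{\gamma}_k\mathbf{y}_k$ and split the step as $\tfrac{1}{m}u^T\boldsymbol{\gamma}_k\mathbf{y}_k=\alpha_k\bar g_k+\alpha_k(\bar y_k-\bar g_k)+\tfrac{1}{m}u^T\boldsymbol{\gamma}_k(\mathbf{y}_k-\nu\bar y_k)$, using $\alpha_k=\tfrac{1}{m}u^T\boldsymbol{\gamma}_k\nu$ and $\bar y_k=\bar g_k+(\bar y_k-\bar g_k)$. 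Since $\bar g_k=\mathscr{G}_k(\bar x_k)$ is the gradient of a $(\lambda_k\mu_f)$-strongly convex, $L_k$-smooth function with $\mathscr{G}_k(x^*_{\lambda_k})=0$ (Lemma \ref{lemma:grad_track_props}(ii,iii)), the map $\mathbf{I}-\alpha_k\mathscr{G}_k$ contracts toward $x^*_{\lambda_k}$ with factor $1-\mu_f\alpha_k\lambda_k$ once $\alpha_k\le 2/(\lambda_k\mu_f+L_k)$ — which is where the threshold $k\ge K$ first enters, as $\alpha_k\to 0$. Bounding the two error parts by Lemma \ref{lemma:grad_track_props}(iv) and $\|\tfrac{1}{m}u^T\boldsymbol{\gamma}_k\|_2\le\tfrac{\hat\gamma_k\|u\|_2}{m}$, and passing from $x^*_{\lambda_k}$ to $x^*_{\lambda_{k-1}}$ via Lemma \ref{lemma:IR-props}(ii), yields the first row and $h_{1,k}$.

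For the second coordinate the identities give $\mathbf{x}_{k+1}-\mathbf{1}\bar x_{k+1}=\left(\mathbf{R}-\tfrac{\mathbf{1}u^T}{m}\right)(\mathbf{x}_k-\mathbf{1}\bar x_k-\boldsymbol{\gamma}_k\mathbf{y}_k)$, where I subtract the consensus direction using $\left(\mathbf{R}-\tfrac{\mathbf{1}u^T}{m}\right)\mathbf{1}=0$. Taking the $\|\cdot\|_{\mathbf{R}}$-norm and invoking $\sigma_{\mathbf{R}}<1$ (Lemma \ref{lemma:norms_relations}(i)) reduces the task to bounding $\|\boldsymbol{\gamma}_k\mathbf{y}_k\|_{\mathbf{R}}$. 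Splitting $\mathbf{y}_k=\nu\bar y_k+(\mathbf{y}_k-\nu\bar y_k)$, the consensus part is handled by the rank-one identity of Remark \ref{rem:norms} with $\|\boldsymbol{\gamma}_k\nu\|_{\mathbf{R}}\le\hat\gamma_k\|\nu\|_{\mathbf{R}}$ and the two bounds of Lemma \ref{lemma:grad_track_props}(iv) on $\|\bar y_k\|_2$ (bringing in $\|\mathbf{x}_k-\mathbf{1}\bar x_k\|$ and $\|\bar x_k-x^*_{\lambda_k}\|$), while the deviation part is converted to the $\mathbf{C}$-norm via $\|W\|_{\mathbf{R}}\le\delta_{\mathbf{R},\mathbf{C}}\|W\|_{\mathbf{C}}$ (Lemma \ref{lemma:norms_relations}(ii)); using $\|\cdot\|_2\le\|\cdot\|_{\mathbf{R}}$ and Lemma \ref{lemma:IR-props}(ii) gives the second row and $h_{2,k}$.

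The third coordinate is the crux. From the $\mathbf{y}$-update and $\bar y_{k+1}=\bar y_k+G_{k+1}(\mathbf{x}_{k+1})-G_k(\mathbf{x}_k)$ one gets $\mathbf{y}_{k+1}-\nu\bar y_{k+1}=\left(\mathbf{C}-\tfrac{\nu\mathbf{1}^T}{m}\right)(\mathbf{y}_k-\nu\bar y_k)+\left(\mathbf{I}-\tfrac{\nu\mathbf{1}^T}{m}\right)(\mathbf{G}_{k+1}(\mathbf{x}_{k+1})-\mathbf{G}_k(\mathbf{x}_k))$; taking $\|\cdot\|_{\mathbf{C}}$ and using $\sigma_{\mathbf{C}}<1$ and the definition of $c_0$ (with $\|W\|_{\mathbf{C}}\le\delta_{\mathbf{C},2}\|W\|_2$) leaves $\|\mathbf{G}_{k+1}(\mathbf{x}_{k+1})-\mathbf{G}_k(\mathbf{x}_k)\|_2$ to control. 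I would split this as $\|\mathbf{G}_{k+1}(\mathbf{x}_{k+1})-\mathbf{G}_{k+1}(\mathbf{x}_k)\|_2+|\lambda_{k+1}-\lambda_k|\,\|\nabla\mathbf{f}(\mathbf{x}_k)\|_2$. The first term is $\le L_k\|\mathbf{x}_{k+1}-\mathbf{x}_k\|_2$ (using $L_{k+1}\le L_k$), and substituting $\mathbf{x}_{k+1}-\mathbf{x}_k=(\mathbf{R}-\mathbf{I})(\mathbf{x}_k-\mathbf{1}\bar x_k)-\mathbf{R}\boldsymbol{\gamma}_k\mathbf{y}_k$ and bounding $\|\boldsymbol{\gamma}_k\mathbf{y}_k\|_2$ as in the second coordinate produces the non-$\Lambda_k$ parts of $H_{31,k},H_{32,k},H_{33,k}$. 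The second term equals $\lambda_k\Lambda_k\|\nabla\mathbf{f}(\mathbf{x}_k)\|_2$, which I would bound by inserting $\mathbf{1}\bar x_k$ and $\mathbf{1}(x^*_{\lambda_k})^T$, using $L_f$-Lipschitzness, $\|\mathbf{1}a\|_2=\sqrt{m}\|a\|_2$, and a uniform bound $B_{\mathbf{g}}$ on $\|\nabla\mathbf{f}\|_2$ along the (bounded, by Lemma \ref{lemma:IR-props}(a)) Tikhonov trajectory; this is exactly where the terms $2\sqrt{m}\Lambda_k$, $2\Lambda_k$, and the $B_{\mathbf{g}}$-residual appear.

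I expect this regularization-drift term to be the main obstacle: it is the only place where the time-varying $\lambda_k$ couples into the gradient-tracking error, and consolidating the $\Lambda_k$ and $\Lambda_{k-1}$ factors into a clean, uniformly valid form requires the boundedness of the iterates (hence $k\ge K$) and the monotonicity facts of Lemma \ref{lem:update_rules_props} (notably $\Lambda_k\le\Lambda_{k-1}$). Finally, converting $\|\mathbf{x}_k-\mathbf{1}\bar x_k\|_2\le\|\mathbf{x}_k-\mathbf{1}\bar x_k\|_{\mathbf{R}}$, $\|\mathbf{y}_k-\nu\bar y_k\|_2\le\|\mathbf{y}_k-\nu\bar y_k\|_{\mathbf{C}}$ (Lemma \ref{lemma:norms_relations}(ii)) and $\|\bar x_k-x^*_{\lambda_k}\|_2\le\|\bar x_k-x^*_{\lambda_{k-1}}\|_2+\tfrac{M}{\mu_f}\Lambda_{k-1}$ delivers the third row and $h_{3,k}$, and stacking the three inequalities gives $\Delta_{k+1}\le H_k\Delta_k+h_k$ for all $k\ge K$.
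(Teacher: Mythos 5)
Your proposal is correct and follows essentially the same route as the paper: the same three decompositions (splitting $\tfrac{1}{m}u^T\boldsymbol{\gamma}_k\mathbf{y}_k$ into $\alpha_k\bar g_k$, $\alpha_k(\bar y_k-\bar g_k)$, and the consensus deviation; the rank-one-corrected identities for $\mathbf{x}_{k+1}-\mathbf{1}\bar x_{k+1}$ and $\mathbf{y}_{k+1}-\nu\bar y_{k+1}$; and the split of $\mathbf{G}_{k+1}(\mathbf{x}_{k+1})-\mathbf{G}_k(\mathbf{x}_k)$ into a Lipschitz increment plus the $\lambda_k\Lambda_k\|\nabla\mathbf{f}(\mathbf{x}_k)\|_2$ drift), with the same use of Lemmas \ref{lemma:IR-props}--\ref{lemma:norms_relations}. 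The only (immaterial) deviation is that you bound the drift term via $L_f$-Lipschitzness of $\nabla\mathbf{f}$ and a uniform bound on $\|\nabla\mathbf{f}\|$ along the Tikhonov trajectory, whereas the paper routes it through $\mathbf{G}_k$ and $\nabla\mathbf{g}$ (using $\nabla g(x^*)=0$ and $B_{\mathbf{g}}$); both yield terms dominated by the stated entries of $H_k$ and $h_k$.
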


\begin{proof} 
\noindent First, we show $\Delta_{1,k+1}\leq \textstyle\sum_{j=1}^3 H_{1j,k}\Delta_{j,k} +h_{1,k} $. From \eqref{alg:IRPP_compact1} and Def. \ref{eqn:defs}, we obtain: $$\bar x_{k+1} =  u^T\mathbf{R}\left(\mathbf{x}_k -\boldsymbol{\gamma}_k\mathbf{y}_k\right)/m = \bar x_k - u^T\boldsymbol{\gamma}_k\mathbf{y}_k/m.$$ Thus, we have:
\begin{align*}
\bar x_{k+1} &=\bar x_k - u^T\boldsymbol{\gamma}_k\left(\mathbf{y}_k-\nu \bar y_k+\nu\bar y_k\right)/m 
\\
& = \bar x_k -\alpha_k  \bar g_k -\alpha_k  \left(\bar y_k -\bar g_k\right) - u^T\boldsymbol{\gamma}_k\left(\mathbf{y}_k-\nu\bar y_k\right)/m.
\end{align*}
From Assumption \ref{assum:update_rules}, $\alpha_k   <\tfrac{2}{L_0}\leq \tfrac{2}{L_k}$ for all $k\geq K$ for some $K$. From Lemma \ref{lemma:grad_track_props}(iii), $\mathscr{G}_k(x)$ is $(\mu_f\lambda_k)$-strongly convex and $L_k$-smooth. Invoking Lemma 10 in \cite{QuLi18}, we have for $\alpha_k   <\tfrac{2}{L_k}$ that 
$\left\|\bar x_k -\alpha_k  \bar g_k -x^*_{\lambda_k}\right\|_2 \leq \max\left(|1-\mu_f\lambda_k\alpha_k  |,|1-L_k\alpha_k  |\right)\left\|\bar x_k-x^*_{\lambda_k}\right\|_2$.
Thus, since $\mu_f\lambda_k \leq L_k$, we obtain for $\alpha_k   \leq\tfrac{1}{L_k}$ that $\left\|\bar x_k -\alpha_k  \bar g_k -x^*_{\lambda_k}\right\| \leq \left(1-\mu_f\lambda_k\alpha_k  \right)\left\|\bar x_k-x^*_{\lambda_k}\right\|$.
Using the preceding two relations, we obtain:
\begin{align*}
&\left\|\bar x_{k+1}-x^*_{\lambda_k}\right\|_2 
 = \left\|\bar x_k -x^*_{\lambda_k}-\alpha_k  \bar g_k -\alpha_k  \left(\bar y_k -\bar g_k\right) \right. \\ &\left. -\tfrac{1}{m}u^T\boldsymbol{\gamma}_k\left(\mathbf{y}_k-\nu\bar y_k\right)\right\|_2 
 \leq \left(1-\mu_f\alpha_k  \lambda_k\right)\|\bar x_k-x^*_{\lambda_k}\|_2 \\&+\alpha_k  \|\bar y_k -\bar g_k\|_2 +\tfrac{1}{m}\left\|u^T\boldsymbol{\gamma}_k\left(\mathbf{y}_k-\nu\bar y_k\right)\right\|_2.
\end{align*}
Adding and subtracting $x^*_{\lambda_{k-1}}$ and using Lemmas \ref{lemma:IR-props} and \ref{lemma:grad_track_props}(iv):  
\begin{align*}
&\left\|\bar x_{k+1}-x^*_{\lambda_k}\right\|_2 
 \leq \left(1-\mu_f\alpha_k  \lambda_k\right)\left\|\bar x_k-x^*_{\lambda_{k-1}}\right\|_2 +\tfrac{M\Lambda_{k-1}}{\mu_f}\\ &+\tfrac{\alpha_k  L_k}{\sqrt{m}}\|\mathbf{x}_k-\mathbf{1}\bar x_k\|_2+\tfrac{\|u\|_2\|\boldsymbol{\gamma}_k\|_2}{m}\left\|\mathbf{y}_k-\nu\bar y_k\right\|_2.
\end{align*}
Then, the desired inequality is obtained by invoking Lemma \ref{lemma:norms_relations}(ii), Remark \ref{rem:norms}, and definition of $\hat\gamma_k$.

\noindent  Second, we show $\Delta_{2,k+1}\leq \textstyle\sum_{j=1}^3 H_{2j,k}\Delta_{j,k} +h_{2,k} $. From \eqref{alg:IRPP_compact1} and Def. \ref{eqn:defs} and that $\mathbf{R}\mathbf{1} = \mathbf{1}$, we have:
\begin{align*}
\mathbf{x}_{k+1} - \mathbf{1} \bar x_{k+1}& \textstyle{= \mathbf{R}\left(\mathbf{x}_k-\boldsymbol{\gamma}_k \mathbf{y}_k\right) - \mathbf{1}\bar x_k +\tfrac{1}{m}\mathbf{1}u^T\boldsymbol{\gamma}_k \mathbf{y}_k} \\&\textstyle{= \left(\mathbf{R} -\mathbf{1}u^T/m\right)\left(\left(\mathbf{x}_k-\mathbf{1} \bar x_k\right) -\boldsymbol{\gamma}_k  \mathbf{y}_k\right).}
\end{align*}
Applying Lemma \ref{lemma:norms_relations}, Remark \ref{rem:norms}, and Lemma \ref{lemma:grad_track_props}, we obtain: 
\begin{align*}
&\|\mathbf{x}_{k+1}-\mathbf{1}\bar x_{k+1}\|_{\mathbf{R}} \leq \sigma_{\mathbf{R}}\left\|\mathbf{x}_k-\mathbf{1}\bar x_k\right\|_{\mathbf{R}}+ \sigma_{\mathbf{R}}\|\boldsymbol{\gamma}_k \|_{\mathbf{R}} \|\mathbf{y}_k\|_{\mathbf{R}}\\
&\leq \sigma_{\mathbf{R}}\left\|\mathbf{x}_k-\mathbf{1}\bar x_k\right\|_{\mathbf{R}}+ \sigma_{\mathbf{R}}\|\boldsymbol{\gamma}_k \|_{2}\|\mathbf{y}_k-\nu \bar y_k\|_{\mathbf{R}}\\ 
&+ \sigma_{\mathbf{R}}\|\boldsymbol{\gamma}_k \|_{2} \|\nu\|_{\mathbf{R}}\| \bar y_k\|_{2}\\
&\leq \sigma_{\mathbf{R}}\left(1+ \hat\gamma_k\|\nu\|_{\mathbf{R}}L_k/\sqrt{m}\right) \left\|\mathbf{x}_k-\mathbf{1}\bar x_k\right\|_{\mathbf{R}}\\
&+ \sigma_{\mathbf{R}}\hat\gamma_k\delta_{\mathbf{R},\mathbf{C}} \|\mathbf{y}_k-\nu \bar y_k\|_{\mathbf{C}}+ \sigma_{\mathbf{R}}\hat\gamma_k L_k\|\nu\|_{\mathbf{R}}\left\|\bar x_k-x^*_{\lambda_k}\right\|_2.
\end{align*}
Adding and subtracting $x^*_{\lambda_{k-1}}$ and using Lemma \ref{lemma:IR-props}, we obtain the desired inequality.

\noindent Third, we show $\Delta_{3,k+1}\leq \textstyle\sum_{j=1}^3 H_{3j,k}\Delta_{j,k} +h_{3,k} $.  From \eqref{alg:IRPP_compact2} and the definition of $\mathbf{G}_k(\mathbf{x})$ in Def. \ref{eqn:defs}, we obtain $\mathbf{y}_{k+1} = \mathbf{C}\mathbf{y}_k +\mathbf{G}_{k+1}\left(\mathbf{x}_{k+1}\right)-\mathbf{G}_{k}\left(\mathbf{x}_{k}\right).$ Multiplying both sides of the preceding relation by $\tfrac{1}{m}\mathbf{1}^T$ and using the definition of $\bar y_k$ in Def. \ref{eqn:defs}, we obtain that $\bar y_{k+1} = \bar y_{k}+\tfrac{1}{m}\mathbf{1}^T\mathbf{G}_{k+1}\left(\mathbf{x}_{k+1}\right)-\tfrac{1}{m}\mathbf{1}^T\mathbf{G}_{k}\left(\mathbf{x}_{k}\right)$. From the last two relations, we have:
\begin{align*}
&\mathbf{y}_{k+1} - \nu \bar y_{k+1} = \left(\mathbf{C}-\nu\mathbf{1}^T/m\right)\left(\mathbf{y}_k-\nu \bar y_k\right) \\ &+ \left(\mathbf{I}-\nu\mathbf{1}^T/m\right)\left(\mathbf{G}_{k+1}\left(\mathbf{x}_{k+1}\right)-\mathbf{G}_{k}\left(\mathbf{x}_{k}\right)\right).
\end{align*}
Invoking Lemma \ref{lemma:norms_relations}, $\mathbf{G}_k(\mathbf{x})$ in Def. \ref{eqn:defs} and $c_0$, and we obtain:
\begin{align}
&\left\|\mathbf{y}_{k+1} - \nu \bar y_{k+1}\right\|_{\mathbf{C}} \leq \sigma_{\mathbf{C}}\left\|\mathbf{y}_k-\nu \bar y_k\right\|_{\mathbf{C}} \notag\\ &+ c_0\left\|\mathbf{G}_{k+1}\left(\mathbf{x}_{k+1}\right)-\mathbf{G}_{k}\left(\mathbf{x}_{k}\right)\right\|_2\notag\\
&\leq \sigma_{\mathbf{C}}\left\|\mathbf{y}_k-\nu \bar y_k\right\|_{\mathbf{C}} + c_0\left\|\lambda_{k+1}\nabla \mathbf{f}(\mathbf{x}_{k})-\lambda_{k}\nabla \mathbf{f}(\mathbf{x}_{k})\right\|_2\notag\\ 
&+c_0\left\|\mathbf{G}_{k+1}\left(\mathbf{x}_{k+1}\right) -\nabla \mathbf{g}(\mathbf{x}_k)  -\lambda_{k+1}\nabla \mathbf{f}(\mathbf{x}_k)\right\|_2\notag\\
&\leq \sigma_{\mathbf{C}}\left\|\mathbf{y}_k-\nu \bar y_k\right\|_{\mathbf{C}}+c_0\left|1- \lambda_{k+1}/\lambda_{k}\right|\left\|\lambda_k\nabla \mathbf{f}(\mathbf{x}_{k})\right\|_2\notag\\ 
&+ c_0L_{k}\left\|\mathbf{x}_{k+1}-\mathbf{x}_k\right\|_2.\label{ineq:main_lemma_conv_part_c}
\end{align}
 From Lemma \ref{lemma:IR-props}, there exists a scalar $B_{\mathbf{g}}<\infty$ such that $L_g\|\mathbf{1}x^*_{\lambda_k}-\mathbf{1}x^*\|_2\leq B_{\mathbf{g}}$. From $\nabla g(x^*)=0$:
\begin{align*}
& \|\lambda_k\nabla \mathbf{f}(\mathbf{x}_{k})\|_2 \leq \|\nabla \mathbf{g}(\mathbf{x}_{k})+\lambda_k\nabla \mathbf{f}(\mathbf{x}_{k})\|_2\\ 
& +\|\nabla \mathbf{g}(\mathbf{x}_{k})-\nabla \mathbf{g}(\mathbf{1}x^*)\|_2\\
& \leq \|\nabla \mathbf{g}(\mathbf{x}_{k})+\lambda_k\nabla \mathbf{f}(\mathbf{x}_{k}) - \nabla \mathbf{g}(\mathbf{1}x^*_{\lambda_k})-\lambda_k\nabla \mathbf{f}(\mathbf{1}x^*_{\lambda_k}) \|_2 \\ 
&+ L_g\|\mathbf{x}_{k}-\mathbf{1}x^*\|_2 \\
&\leq (L_k+L_g)\|\mathbf{x}_{k}-\mathbf{1}x^*_{\lambda_k}\|_2+ L_g\|\mathbf{1}x^*_{\lambda_k}-\mathbf{1}x^*\|_2 \\ 
&\leq 2L_k\left(\|\mathbf{x}_{k}-\mathbf{1}\bar x_{k}\|_2+\|\mathbf{1}\bar x_{k}-\mathbf{1}x^*_{\lambda_k}\|_2\right)+ B_{\mathbf{g}} \\
&\leq 2L_k\|\mathbf{x}_{k}-\mathbf{1}\bar x_{k}\|_2+2\sqrt{m}L_k\|\bar x_{k}-x^*_{\lambda_k}\|_2+ B_{\mathbf{g}}.
\end{align*}
From row-stochasticity of $\mathbf{R}$, we have $\left(\mathbf{R}-\mathbf{I}\right)\mathbf{1}\bar x_k =0$. Thus, from Lemma \ref{lemma:grad_track_props} we have:
\begin{align*}
&\left\|\mathbf{x}_{k+1}-\mathbf{x}_k\right\|_2 =  \left\|\mathbf{R}\left(\mathbf{x}_k-\boldsymbol{\gamma}_k \mathbf{y}_k\right)- \mathbf{x}_k\right\|_2  \\ &=  \left\| \left(\mathbf{R}-\mathbf{I}\right)\left(\mathbf{x}_k-\mathbf{1}\bar x_k \right)-\mathbf{R}\boldsymbol{\gamma}_k \mathbf{y}_k\right\|_2\\
& \leq  \left\| \mathbf{R}-\mathbf{I}\right\|_2\left\|\mathbf{x}_k-\mathbf{1}\bar x_k \right\|_2+\|\mathbf{R}\|_2\|\boldsymbol{\gamma}_k \|_2(\|\mathbf{y}_k-\nu\bar y_k\|_2\\ &+\|\nu\|_2\|\bar y_k-\bar g_k\|_2+\|\nu\|_2\|\bar g_k\|_2)\leq   \left\| \mathbf{R}-\mathbf{I}\right\|_2\left\|\mathbf{x}_k-\mathbf{1}\bar x_k \right\|_2\\
&+\hat\gamma_k\|\mathbf{R}\|_2\left(\|\mathbf{y}_k-\nu\bar y_k\|_2\right. \\ & \left.+L_k \|\nu\|_2\left(\|\mathbf{x}_k-\mathbf{1}\bar x_k\|_2/\sqrt{m}+\|\bar x_k-x^*_{\lambda_k}\|_2\right)\right).
\end{align*}
It suffices to find a recursive bound for the term $\left\|\mathbf{x}_{k+1}-\mathbf{x}_k\right\|_2$. From Lemma \ref{lemma:IR-props}, we can write: 
\begin{align*}
\|\bar x_k-x^*_{\lambda_k}\|_2&\leq  \|\bar x_k-x^*_{\lambda_{k-1}}\|_2+\|x^*_{\lambda_{k-1}}-x^*_{\lambda_k}\|_2 \\ &\leq \|\bar x_k-x^*_{\lambda_{k-1}}\|_2+  M/\mu_f\Lambda_{k-1}.
\end{align*}
From \eqref{ineq:main_lemma_conv_part_c}, the preceding three relations, we can obtain the desired inequality.
\end{proof}
Next, we derive a unifying recursive bound for the three error bounds introduced earlier to be used in deriving the rates. 
\begin{proposition}\label{prop:recursive_bound_for_rate} Consider Algorithm \ref{algorithm:IR-push-pull}. Let Assumptions \ref{assum:problem}, \ref{assum:RC}, and \ref{assum:update_rules} hold. Then, there exists an integer $\mathscr{K}\geq 1$ such that for any $k\geq \mathscr{K}$, the following holds:

\noindent (a) $\|\Delta_{k+1}\|_2 \leq (1-0.5\mu_f\alpha_k\lambda_k)\|\Delta_k\|_2 +\Theta\Lambda_{k-1}$, where 
\begin{align*}
\Theta &\triangleq \max\left\{1,\sigma_{\mathbf{R}}\hat\gamma_0 L_0\|\nu\|_{\mathbf{R}},c_0L_{0}\left(\hat\gamma_0\|\mathbf{R}\|_2 \|\nu\|_2 L_0\right. \right.\\ &\left.\left.+\sqrt{m}\Lambda_{0}+\mu_f c_0B_{\mathbf{g}}/M\right)\right\}\sqrt{3}M/\mu_f.
\end{align*}

\noindent (b) There exists a scalar $\mathscr{B}>0$ such that $\|\Delta_{k}\|_2 \leq \tfrac{\mathscr{B}}{k^{1-a-b}}$.

\end{proposition}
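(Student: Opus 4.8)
The plan is to convert the entrywise recursion $\Delta_{k+1}\le H_k\Delta_k+h_k$ of Proposition~\ref{prop:main_ineq_1} (valid for $k\ge K$) into a scalar contraction. Since $\Delta_k,H_k,h_k$ are entrywise nonnegative, taking Euclidean norms gives $\|\Delta_{k+1}\|_2\le\|H_k\Delta_k\|_2+\|h_k\|_2$, so it suffices to establish (I)~$\|h_k\|_2\le\Theta\Lambda_{k-1}$ and (II)~$\|H_k\Delta_k\|_2\le(1-0.5\mu_f\alpha_k\lambda_k)\|\Delta_k\|_2$. For~(I) I would note that each component reads $h_{i,k}=c_{i,k}\tfrac{M}{\mu_f}\Lambda_{k-1}$ with $c_{1,k}=1$, $c_{2,k}=\sigma_{\mathbf R}\hat\gamma_k L_k\|\nu\|_{\mathbf R}$, and $c_{3,k}=c_0L_k(\hat\gamma_k\|\mathbf R\|_2\|\nu\|_2L_k+\sqrt m\Lambda_k+\mu_f c_0 B_{\mathbf g}/M)$; since $\hat\gamma_k,\lambda_k,\Lambda_k$ (hence $L_k$) are decreasing by Lemma~\ref{lem:update_rules_props}, each $c_{i,k}$ is dominated by its value at $k=0$, and these three values are exactly the arguments of the $\max$ in $\Theta$. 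Then $\|h_k\|_2\le\sqrt3\,\max_i h_{i,k}\le\Theta\Lambda_{k-1}$.

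The crux is~(II). I would \emph{not} bound the plain operator norm $\|H_k\|_2$: as $k\to\infty$ one has $\hat\gamma_k,\alpha_k\lambda_k,\Lambda_k\to0$ and $L_k\to L_g$, so $H_k\to H_\infty$, whose first row and column equal $e_1^T$ and $e_1$ but which retains the \emph{nonvanishing} entry $H_{32,k}\to c_0L_g\|\mathbf R-\mathbf I\|_2$; this may exceed $1$ and force $\|H_\infty\|_2>1$. Instead I would work in a weighted Euclidean norm $\|v\|_D\triangleq\|Dv\|_2$ with a \emph{constant} $D=\mathrm{diag}(1,w_2,w_3)$, choosing $w_2,w_3$ and the ratio $w_3/w_2$ small enough that the similarity $\tilde H_k\triangleq DH_kD^{-1}$ replaces $H_{32,k}$ by the attenuated $\tfrac{w_3}{w_2}H_{32,k}$. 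The limit is then block diagonal, $\tilde H_\infty=\mathrm{diag}(1,B)$, where $B$ is the lower-triangular $2\times 2$ matrix with diagonal $(\sigma_{\mathbf R},\sigma_{\mathbf C})$ and lower-left entry $\tfrac{w_3}{w_2}c_0L_g\|\mathbf R-\mathbf I\|_2$; one has $\|B\|_2<1$ for small $w_3/w_2$ since $\sigma_{\mathbf R},\sigma_{\mathbf C}<1$ (Lemma~\ref{lemma:norms_relations}). Because $D$ is constant, $\|\cdot\|_D$ and $\|\cdot\|_2$ are equivalent, so the bound transfers to the stated form and, crucially, preserves the rate in part~(b).

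To bound the top singular value of $\tilde H_k=\tilde H_\infty+E_k$ I would use that $1$ is a \emph{simple} singular value of $\tilde H_\infty$ with singular vectors $e_1$ and spectral gap $1-\|B\|_2=\Theta(1)$. Its first-order change under $E_k$ is $e_1^T E_k e_1=(E_k)_{11}=-\mu_f\alpha_k\lambda_k$, while all other entries of $E_k$ affect it only at \emph{second} order, i.e.\ by $O\bigl(\|E_k\|_2^2/\mathrm{gap}\bigr)$. The main obstacle is to verify $\|E_k\|_2^2=o(\alpha_k\lambda_k)$: using $\theta\hat\gamma_k\le\alpha_k=O(\hat\gamma_k)$, the off-diagonal couplings $\tilde H_{12,k},\tilde H_{13,k},\tilde H_{21,k},\tilde H_{31,k}$ and the diagonal corrections all reduce, when squared and divided by $\alpha_k\lambda_k$, to $\hat\gamma_k/\lambda_k\to0$, together with $\Lambda_k^2/(\hat\gamma_k\lambda_k)=O(k^{-(2-a-b)})\to0$, all by Lemma~\ref{lem:update_rules_props}; the remaining singular values stay near $\|B\|_2<1$ as the lower block converges to $B$. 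Hence for $k\ge K'$, $\|\tilde H_k\|_2\le1-\mu_f\alpha_k\lambda_k+o(\alpha_k\lambda_k)\le1-0.5\mu_f\alpha_k\lambda_k$, which gives~(II) with $\mathscr K\triangleq\max\{K,K'\}$. I expect this perturbation estimate to be the delicate step: crude row/column-sum bounds fail, since $H_{12,k}\sim\alpha_k\gg\alpha_k\lambda_k$ with $H_{11,k}\approx1$ already make the first row sum exceed $1-0.5\mu_f\alpha_k\lambda_k$.

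For part~(b) the plan is a routine induction. With $\alpha_k\ge\theta\hat\gamma_k$ (Assumption~\ref{assum:update_rules}), $\hat\gamma_k\lambda_k=\tfrac{\hat\gamma_0\lambda_0}{(k+1)^{a+b}}$, and $\Lambda_{k-1}\le\tfrac1{k+1}$ (Lemma~\ref{lem:update_rules_props}), part~(a) yields $u_{k+1}\le\bigl(1-\tfrac{q}{(k+1)^{a+b}}\bigr)u_k+\tfrac{\Theta}{k+1}$ for $u_k\triangleq\|\Delta_k\|_2$ and $q\triangleq0.5\mu_f\theta\hat\gamma_0\lambda_0$. I would prove $u_k\le\mathscr B\,k^{-(1-a-b)}$ by induction: invoking Bernoulli's inequality $\bigl(1-\tfrac1{k+1}\bigr)^{1-a-b}\ge1-\tfrac{1-a-b}{k+1}$, the inductive step reduces to $\Theta+\tfrac{(1-a-b)\mathscr B}{k^{1-a-b}}\le q\mathscr B\bigl(\tfrac{k+1}{k}\bigr)^{1-a-b}$, whose left side tends to $\Theta$ and right side to $q\mathscr B$, so it holds for all large $k$ once $\mathscr B\ge2\Theta/q$; enlarging $\mathscr B$ to cover the base case at $k=\mathscr K$ finishes the argument, and $1-a-b>0$ since $a+b<1$.
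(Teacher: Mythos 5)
Your treatment of $\|h_k\|_2\le\Theta\Lambda_{k-1}$ and your induction in part (b) are sound and essentially match the paper (which, for part (a), instead majorizes $H_k$ entrywise by a matrix $\hat H_k$ with shifted diagonal and controls its spectral radius via the determinant criterion of Lemma 5 in \cite{pushi2_2020}). The genuine gap is in your step (II). You reduce the perturbation of the top singular value to the claim $\|E_k\|_2^2=o(\alpha_k\lambda_k)$ with $E_k=\tilde H_k-\tilde H_\infty$, but you verify it only for $\tilde H_{12,k},\tilde H_{13,k},\tilde H_{21,k},\tilde H_{31,k}$ and the diagonal corrections. The $(3,2)$ entry is not of that size: since $H_{32,k}$ carries the factor $L_k=L_g+\lambda_kL_f$ against the non-vanishing term $\left\|\mathbf{R}-\mathbf{I}\right\|_2$, one has
\begin{align*}
(E_k)_{32}=\tfrac{w_3}{w_2}\left(c_0\lambda_kL_f\left\|\mathbf{R}-\mathbf{I}\right\|_2+O\!\left(\hat\gamma_k+\Lambda_k\right)\right),
\end{align*}
which is of order $\lambda_k$ whenever $\mathbf{R}\neq\mathbf{I}$. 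Consequently $\|E_k\|_2^2/(\alpha_k\lambda_k)$ is bounded below by a constant times $\lambda_k/\hat\gamma_k=(\lambda_0/\hat\gamma_0)(k+1)^{a-b}\to\infty$, because Assumption \ref{assum:update_rules} imposes $b<a$, i.e., the regularization parameter decays \emph{more slowly} than the step size. So the crude second-order estimate $O(\|E_k\|_2^2/\mathrm{gap})$ does not close. A fix exists but must be stated: either take the reference matrix to be the time-varying $\mathrm{diag}(1,B_k)$ that keeps $L_k$ in the lower block (its norm stays uniformly below $1$ for small $w_3/w_2$ since $L_k\le L_0$, and the residual perturbation is then genuinely $O(\hat\gamma_k+\Lambda_k)$), or invoke the sharper fact that the second-order change of the simple singular value $1$ involves only the couplings of $e_1$ with the complementary singular subspace, i.e., only the first row and column of $E_k$.

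A second, smaller issue: the weighted-norm contraction does not ``transfer to the stated form'' of part (a). From $\|\Delta_{k+1}\|_D\le(1-0.5\mu_f\alpha_k\lambda_k)\|\Delta_k\|_D+\Theta\Lambda_{k-1}$, converting back to the Euclidean norm multiplies the contraction coefficient by $\|D\|_2\|D^{-1}\|_2>1$, which exceeds $1$ for large $k$ since $1-0.5\mu_f\alpha_k\lambda_k\to1$; indeed $\|H_k\|_2\ge H_{32,k}\to c_0L_g\left\|\mathbf{R}-\mathbf{I}\right\|_2$ may exceed $1$, so your inequality (II) in the plain $\ell_2$ norm is not attainable for an arbitrary nonnegative $\Delta_k$. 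You should carry the recursion and the induction of part (b) entirely in the fixed norm $\|\cdot\|_D$ and convert to $\|\cdot\|_2$ only at the end, which costs a constant in $\mathscr{B}$ but preserves the rate $k^{-(1-a-b)}$.
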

\begin{proof}
\noindent \textbf{(a)} In the first step, we consider Proposition \ref{prop:main_ineq_1}. 
Let us define the sequence $\{\rho_k\}$ as $\rho_k \triangleq 1-0.5\mu_f\alpha_k  \lambda_k$ for $k\geq 0$. Next, we the utilize our assumptions to find suitable upper bounds for some of the above terms. We define $\hat H_k =[\hat H_{ij,k}]_{3\times 3}$ and $\hat h_k =[\hat h_{i,k}]_{3\times 1}$ as follows:
\begin{align*}
&\hat H_{11,k} := H_{11,k} , \quad 
\hat H_{12,k} := \tfrac{\alpha_k   L_0}{\sqrt{m}}, \  
\hat H_{13,k} :=H_{13,k} ,\\
&\hat H_{21,k} :=  \sigma_{\mathbf{R}}\hat\gamma_k L_0\|\nu\|_{\mathbf{R}},\  
\hat H_{22,k} := \rho_k-\tfrac{1-\sigma_{\mathbf{R}}}{2} ,\\  
&\hat H_{23,k} :=  H_{23,k}, \\
&\hat H_{31,k} := c_0L_{0}\left(\hat\gamma_k\|\mathbf{R}\|_2 \|\nu\|_2L_0+2\sqrt{m}\Lambda_{k}\right),\\
&\hat H_{32,k} := c_0L_{0}\left(\left\| \mathbf{R}-\mathbf{I}\right\|_2 +\hat\gamma_k\|\mathbf{R}\| \|\nu\|_2 \tfrac{L_0}{\sqrt{m}}   + 2\Lambda_{0}\right),\\
&\hat H_{33,k} := \rho_k-\tfrac{1-\sigma_{\mathbf{C}}}{2} ,\\
&\hat h_{1,k}:=\tfrac{\Theta}{\sqrt{3}}\Lambda_{k-1}, \quad 
\hat h_{2,k}:=\tfrac{\Theta}{\sqrt{3}}\Lambda_{k-1},\quad 
\hat h_{3,k}:= \tfrac{\Theta}{\sqrt{3}}\Lambda_{k-1}.
\end{align*}
Note that we have: 
\begin{align*}
\hat H_{22,k}- H_{22,k} 
&=\tfrac{1-\sigma_{\mathbf{R}}}{2} -0.5\mu_f\alpha_k\lambda_k-\hat\gamma_k\|\nu\|_{\mathbf{R}}\tfrac{L_k}{\sqrt{m}}.
\end{align*} 
From Assumption \ref{assum:update_rules} and the definition of $\alpha_k$, we have $\hat \gamma_k\to 0$, $\alpha_k\to 0$, and $\lambda_k \to 0$. Thus, there exists an integer $K_\mathbf{R}\geq 1$ such that for all $k \geq K_{\mathbf{R}}$ we have $H_{22,k}\leq \hat H_{22,k}$. Similarly, there exists an integer $K_\mathbf{C}\geq 1$ such that for all $k \geq K_{\mathbf{C}}$ we have $H_{33,k}\leq \hat H_{33,k}$. Thus, by taking to account that $\lambda_k$ and $\Lambda_k$ are nonincreasing sequences and invoking the definition of $\Theta$, we have $H_k\leq \hat H_k$ and $h_{k}\leq \hat h_{k}$. This implies that for all $k\geq max\{K, K_{\mathbf{R}}, K_{\mathbf{C}}\}$, we have $\Delta_{k+1} \leq \hat H_k\Delta_k+\hat h_k$. Consequently, we obtain:
\begin{align}\label{eqn:rec_Delta_proof}
\textstyle{\left\|\Delta_{k+1}\right\|_2 \leq \rho\left(\hat H_k\right)\left\|\Delta_k\right\|_2+\Theta \Lambda_{k-1},}
\end{align}
where $\rho\left(\hat H_k\right)$ denotes the spectral norm of $\hat H_k$. Next, we show that for a sufficiently large $k$, we have $\rho\left(\hat H_k\right) \leq \rho_k $. To show this relation, employing Lemma 5 in \cite{pushi2_2020}, it suffices to show that $0\leq \hat H_{ii,k} <\rho_k$ for $i \in \{1,2,3\}$ and $\text{det}\left(\rho_k\mathbf{I}-\hat H_k\right)>0$. Among these, it can be easily seen that $H_{ii,k} <\rho_k$ holds for all $i \in \{1,2,3\}$. Since $\alpha_k \to 0$ and $\lambda_k\to 0$, there exists an integer $K_1$ such that $\hat H_{11,k} = 1-\mu_f\alpha_k  \lambda_k>0$. Similarly, from $\sigma_{\mathbf{R}}<1$ and $\sigma_{\mathbf{C}}<1$, there exists integers $K_2$ and $K_3$ such that $\hat H_{22,k}>0$ and $\hat H_{33,k}>0$, respectively. Next, we show $\text{det}\left(\rho_k\mathbf{I}-\hat H_k\right)>0$. 
\begin{align*}
&\textstyle{\text{det}\left(\rho_k\mathbf{I}-\hat H_k\right) }
 \textstyle{= \left(0.5\mu_f\alpha_k \lambda_k\right)\left( \tfrac{1-\sigma_{\mathbf{R}}}{2}\right)\left( \tfrac{1-\sigma_{\mathbf{C}}}{2}\right)}\\
&\textstyle{- \left(0.5\mu_f\alpha_k \lambda_k\right)\left(\sigma_{\mathbf{R}}\hat\gamma_k\delta_{\mathbf{R},\mathbf{C}}\right)c_0L_{0}\left(\left\| \mathbf{R}-\mathbf{I}\right\|_2+ 2\Lambda_{0}\right.} \\ 
&\textstyle{\left. +\hat\gamma_k\|\mathbf{R}\| \|\nu\|_2 \tfrac{L_0}{\sqrt{m}}   \right)-\left(\tfrac{1-\sigma_{\mathbf{C}}}{2}\right)\left( \tfrac{\alpha_k   L_0}{\sqrt{m}}\right)\left( \sigma_{\mathbf{R}}\hat\gamma_k L_0\|\nu\|_{\mathbf{R}}\right)}\\
&\textstyle{ -\left(\tfrac{\alpha_k   L_0}{\sqrt{m}}\right)\left(\sigma_{\mathbf{R}}\hat\gamma_k\delta_{\mathbf{R},\mathbf{C}}\right)\left(c_0L_{0}\left(\hat\gamma_k\|\mathbf{R}\|_2 \|\nu\|_2L_0+2\sqrt{m}\Lambda_{k}\right)\right)}\\
& \textstyle{-\left(\tfrac{\hat\gamma_k\|u\|_2}{m}\right)\left(\sigma_{\mathbf{R}}\hat\gamma_k L_0\|\nu\|_{\mathbf{R}}\right)\left(c_0L_{0}\left(\left\| \mathbf{R}-\mathbf{I}\right\|_2  \right. \right.} \\
&\textstyle{ \left.\left. +\hat\gamma_k\|\mathbf{R}\| \|\nu\|_2 \tfrac{L_0}{\sqrt{m}}   + 2\Lambda_{0}\right)\right)}\\
&\textstyle{ - \left(\tfrac{1-\sigma_{\mathbf{R}}}{2}\right)\left(c_0L_{0}\left(\hat\gamma_k\|\mathbf{R}\|_2 \|\nu\|_2L_0+2\sqrt{m}\Lambda_{k}\right)\right)\left(\tfrac{\hat\gamma_k\|u\|_2}{m}\right).}
\end{align*}
Next, we find lower and upper bounds on $\alpha_k$ in terms of $\hat \gamma_k$. Assumption \ref{assum:update_rules} provides $\theta\hat \gamma_k$ as a lower bound for $\alpha_k$. To find an upper bound, from Lemma 1 in \cite{pushi1_2020}, we have that the eigenvector $u$ is nonzero only on the entries $i \in \mathcal{R}_{\mathbf{R}}$. Similarly, the eigenvector $\nu$ is nonzero only on the entries $i \in \mathcal{R}_{\mathbf{C^T}}$. Also, we have $u^T\nu>0$. Thus, 
we can write:
\begin{align*}
\tfrac{\alpha_k}{\hat \gamma_k}&\textstyle{=\tfrac{1}{m}u^T\tfrac{\boldsymbol{\gamma}_k}{\hat \gamma_k}\nu=\tfrac{1}{m}\textstyle\sum\nolimits_{i \in\mathcal{R}_{\mathbf{R}\cap\mathbf{C}^T} }u_iv_i\tfrac{\gamma_{i,k}}{\hat \gamma_k} }\\ 
&\textstyle{\leq \tfrac{1}{m}\textstyle\sum\nolimits_{i \in\mathcal{R}_{\mathbf{R}\cap\mathbf{C}^T} }u_iv_i\tfrac{\gamma_{i,k}}{\hat \gamma_k} =\tfrac{1}{m}u^T\nu>0.}
\end{align*}
Let us define $\bar \theta = \tfrac{1}{m}u^T\nu$. Thus, we have $\theta\hat \gamma_k \leq \alpha_k \leq \bar \theta \hat \gamma_k$ for all $k\geq 0$. Using these bounds and rearranging the terms:
\begin{align*}
\text{det}\left(\rho_k\mathbf{I}-\hat H_k\right) \geq  -c_1\hat\gamma_k^3-c_2\hat\gamma_k^2+c_3\hat\gamma_k\lambda_k-c_4\hat\gamma_k\Lambda_k,
\end{align*}
where the scalars $c_1$, $c_2$, $c_3$ are defined as below:
\begin{align*}
c_1 &\textstyle{\triangleq \left(0.5\mu_f\bar \theta \lambda_0\right)\left(\sigma_{\mathbf{R}}\delta_{\mathbf{R},\mathbf{C}}\right)c_0L_{0}\left(\|\mathbf{R}\| \|\nu\|_2 \tfrac{L_0}{\sqrt{m}}\right)}\\
&\textstyle{+\left(\tfrac{\bar \theta   L_0}{\sqrt{m}}\right)\left(\sigma_{\mathbf{R}}\delta_{\mathbf{R},\mathbf{C}}\right)\left(c_0L_{0}\left(\|\mathbf{R}\|_2 \|\nu\|_2L_0\right)\right)}\\
& \textstyle{+\left(\tfrac{\|u\|_2}{m}\right)\left(\sigma_{\mathbf{R}} L_0\|\nu\|_{\mathbf{R}}\right)\left(c_0L_{0}\|\mathbf{R}\| \|\nu\|_2 \tfrac{L_0}{\sqrt{m}}   \right)}\\
c_2 &\textstyle{\triangleq \left(0.5\mu_f\bar \theta \lambda_0\right)\left(\sigma_{\mathbf{R}}\delta_{\mathbf{R},\mathbf{C}}\right)c_0L_{0}\left(\left\| \mathbf{R}-\mathbf{I}\right\|_2 + 2\Lambda_{0}\right)}\\
&\textstyle{+\left(\tfrac{1-\sigma_{\mathbf{C}}}{2}\right)\left( \tfrac{\bar \theta  L_0}{\sqrt{m}}\right)\left( \sigma_{\mathbf{R}} L_0\|\nu\|_{\mathbf{R}}\right)}\\
&\textstyle{+\left(\tfrac{\bar \theta   L_0}{\sqrt{m}}\right)\left(\sigma_{\mathbf{R}}\delta_{\mathbf{R},\mathbf{C}}\right)\left(c_0L_{0}2\sqrt{m}\Lambda_{0}\right)}\\
&\textstyle{+\left(\tfrac{\|u\|_2}{m}\right)\left(\sigma_{\mathbf{R}} L_0\|\nu\|_{\mathbf{R}}\right)\left(c_0L_{0}\left(\left\| \mathbf{R}-\mathbf{I}\right\|_2  + 2\Lambda_{0}\right)\right)}\\
&\textstyle{+\left(\tfrac{1-\sigma_{\mathbf{R}}}{2}\right)\left(c_0L_{0}\left(\mathbf{R}\|_2 \|\nu\|_2L_0\right)\right)\left(\tfrac{\|u\|_2}{m}\right)}\\
c_3 &\textstyle{\triangleq (0.5)^3\mu_f\theta\left(1-\sigma_{\mathbf{R}}\right)\left( 1-\sigma_{\mathbf{C}}\right)}\\
c_4 &\textstyle{\triangleq\left(\tfrac{1-\sigma_{\mathbf{R}}}{2}\right)\left(c_0L_{0}\sqrt{m}\right)\left(\tfrac{\|u\|_2}{m}\right).}
\end{align*}
It suffices to show that $-c_1\hat\gamma_k^3-c_2\hat\gamma_k^2+c_3\hat\gamma_k\lambda_k-c_4\hat\gamma_k\Lambda_k>0$ for any sufficiently large $k$. From Lemma \ref{lem:update_rules_props}, we have $\tfrac{\Lambda_k}{\lambda_k}\to 0$. Thus, there exists an integer $K_4\geq 0$ such that for any $k \geq K_4$ we have that $c_4\Lambda_k \leq 0.5c_3\lambda_k$.  As such, it suffices to show that $ c_1\hat\gamma_k^2+c_2\hat\gamma_k<0.5c_3\lambda_k $. From Lemma \ref{lem:update_rules_props}, since $\hat \gamma_k \to 0$ and $\tfrac{\hat \gamma}{\lambda_k}\to 0$, there exists an integer $K_5\geq 0$ such that $ c_1\hat\gamma_k^2+c_2\hat\gamma_k<0.5c_3\lambda_k $ for any $k\geq K_5$. We conclude that for $\mathscr{K} \triangleq  \max\{K,K_1,K_2,K_3,,K_4,K_5,K_{\mathbf{R}},K_{\mathbf{C}}\}$, we have $\text{det}\left(\rho_k\mathbf{I}-\hat H_k\right)>0$ for any $k \geq \mathscr{K}$.  Therefore, we have $\rho\left(\hat H_k\right) \leq  1-0.5\mu_f\alpha_k  \lambda_k$ for all $k \geq\mathscr{K}$. The desired inequality is obtained from this and the relation \eqref{eqn:rec_Delta_proof}.

\noindent \textbf{(b)} From Lemma \ref{lem:update_rules_props}, we have that $\Lambda_{k-1} \leq \tfrac{1}{k+1}$. From part (a) and Assumption \ref{assum:update_rules}, we obtain for all $k\geq \mathscr{K}$:
\begin{align}\label{eqn:rec_Delta_proof_v2}
\textstyle{\|\Delta_{k+1}\|_2 \leq (1-0.5\mu_f\alpha_k\hat \gamma_k\theta)\|\Delta_k\|_2 +\tfrac{\Theta}{k+1}.}
\end{align}
We use induction to show that the desired relation holds for: 
$$\textstyle{\mathscr{B}\triangleq \max\left\{(\mathscr{K}+1)^{1-a-b}\|\Delta_{\mathscr{K}}\|_2,\tfrac{4\Theta}{\mu_f\lambda_0\hat \gamma_0 \theta}\right\}.}$$ 
First, note that the inequality holds for $k:=\mathscr{K}$. Let us assume that $\|\Delta_{k}\|_2 \leq \tfrac{\mathscr{B}}{k^{1-a-b}}$ for some $k \geq \mathscr{K}$. We show that this relation also holds for $k+1$. Consider Lemma \ref{lem:update_rules_props}. Let us choose $\tau :=\tfrac{\mu_f\theta}{4}$. Thus, from  Lemma \ref{lem:update_rules_props}, there exists a $K_\tau$ such that $\tfrac{(k+1)\hat\gamma_k\lambda_k}{k\hat\gamma_{k-1}\lambda_{k-1}}\leq 1+\tau\hat \gamma_k\lambda_k$ for all $k\geq K_{\tau}$. Thus:
\begin{align}\label{eqn:ineq1_for_induction}
\textstyle{\tfrac{k^{a+b}}{k}\leq \tfrac{(k+1)^{a+b}}{k+1}(1+0.25\mu_f\lambda_0\hat \gamma_0\theta).}
\end{align}
Let $K_6$ be an integer such that $0.5\mu_f\alpha_k\hat \gamma_k\theta<1$. Without loss of generality, let us assume $\mathscr{K} \geq \max\left\{K_\tau,K_6\right\}$. From \eqref{eqn:rec_Delta_proof_v2} and the induction hypothesis, we obtain:
\begin{align*}
\textstyle{\|\Delta_{k+1}\|_2 \leq (1-0.5\mu_f\alpha_k\hat \gamma_k\theta) \tfrac{\mathscr{B}}{k^{1-a-b}} +\tfrac{\Theta}{k+1}.}
\end{align*}
From the preceding relation and \eqref{eqn:ineq1_for_induction}, we obtain:
\begin{align*}
\textstyle{\|\Delta_{k+1}\|_2 \leq   \dfrac{\mathscr{B}(1-0.5\mu_f\alpha_k\hat \gamma_k\theta)(1+0.25\mu_f\alpha_k\hat \gamma_k\theta)}{(k+1)^{1-a-b}} +\tfrac{\Theta}{k+1}}.
\end{align*}
From the definition of $\mathscr{B}$ we have $\Theta \leq 0.25\mu_f \lambda_0 \hat \gamma_0\theta\mathscr{B}$. Therefore, we obtain:
\begin{align*}
\|\Delta_{k+1}\|_2 \leq  &\textstyle{\dfrac{\mathscr{B}\left(1-0.25\mu_f \lambda_0 \hat \gamma_0\theta -0.125(\mu_f \lambda_0 \hat \gamma_0\theta)^2\right)}{(k+1)^{1-a-b}} }\\ &{+ \tfrac{0.25\mu_f \lambda_0 \hat \gamma_0\theta\mathscr{B}}{(k+1)^{1-a-b}}}.
\end{align*}
This implies that $\|\Delta_{k+1}\|_2 \leq \tfrac{\mathscr{B}}{(k+1)^{1-a-b}}$. Thus, the induction statement holds for $k+1$ and hence, the proof is completed.
\end{proof}
Our first main result is provided below where we derive a family of convergence rates for the bilevel formulation \eqref{eqn:bilevel_problem}. 
\begin{theorem}[Rate statements for the bilevel model]\label{thm:rate_ana}
Consider problem \eqref{eqn:bilevel_problem} and Algorithm \ref{algorithm:IR-push-pull}. Let Assumptions \ref{assum:problem}, \ref{assum:RC}, and \ref{assum:update_rules} hold. Then, we have the following results: 

\noindent (a) We have $\lim_{k\to\infty}\bar x_{k}=x^*$. Also, the consensus violation of $\mathbf{x}_k$ and $\mathbf{y}_k$ characterized by $\|\mathbf{x}_{k+1}-\mathbf{1}\bar x_{k+1}\|_{\mathbf{R}}$ and $\left\|\mathbf{y}_{k+1} - \nu \bar y_{k+1}\right\|_{\mathbf{C}}$, respectively, are both bounded by $\mathcal{O}\left(1/k^{1-a-b}\right)$ for any sufficiently large $k$.

\noindent (b) We have $ f(\bar x_k)- f(x^*)\leq \tfrac{\mathscr{Q}_1\left(L_g+\lambda_0L_f\right)}{2}\tfrac{1}{k^{2-2a-3b}}$ for some $\mathscr{Q}_1>0$ and any sufficiently large $k$.

\noindent (c) $g(\bar x_k)- g(x^*)\leq \tfrac{\mathscr{Q}_2\left(L_g+\lambda_0L_f\right)}{2}\tfrac{1}{k^{2-2a-2b}}+\tfrac{\lambda_0\mathscr{Q}_3}{k^b}$ for  $\mathscr{Q}_2,\mathscr{Q}_3>0$ and any sufficiently large $k$.
\end{theorem}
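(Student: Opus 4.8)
The plan is to read off all three claims from the master estimate $\|\Delta_k\|_2 \le \mathscr{B}/k^{1-a-b}$ of Proposition~\ref{prop:recursive_bound_for_rate}(b) together with the Tikhonov-trajectory facts in Lemma~\ref{lemma:IR-props}. Since every coordinate of $\Delta_k$ is bounded by $\|\Delta_k\|_2$, part~(a) is essentially immediate: the first coordinate yields $\|\bar x_k - x^*_{\lambda_{k-1}}\|_2 \le \mathscr{B}/k^{1-a-b}\to 0$ (as $a+b<1$), which combined with $x^*_{\lambda_{k-1}}\to x^*$ from Lemma~\ref{lemma:IR-props}(i) and the triangle inequality gives $\bar x_k\to x^*$, while the second and third coordinates furnish the two consensus bounds of order $\mathcal{O}(1/k^{1-a-b})$. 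Before turning to (b) and (c) I would record the workhorse estimate $\|\bar x_k - x^*_{\lambda_k}\|_2 \le \|\bar x_k - x^*_{\lambda_{k-1}}\|_2 + \|x^*_{\lambda_{k-1}} - x^*_{\lambda_k}\|_2 \le \mathscr{B}/k^{1-a-b} + (M/\mu_f)\Lambda_{k-1}$, where the second step uses Lemma~\ref{lemma:IR-props}(ii); since $\Lambda_{k-1}\le 1/(k+1)$ and $1-a-b<1$, the first term dominates and hence $\|\bar x_k - x^*_{\lambda_k}\|_2^2 = \mathcal{O}(1/k^{2-2a-2b})$.

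For part~(b) the idea is to extract the $f$-gap from the regularized objective $\varphi_k(x)\triangleq g(x)+\lambda_k f(x)$. I would write $\lambda_k\bigl(f(\bar x_k)-f(x^*)\bigr) = \bigl(\varphi_k(\bar x_k)-\varphi_k(x^*)\bigr) - \bigl(g(\bar x_k)-g(x^*)\bigr)$ and note that $x^*\in\argmin g$ makes $g(\bar x_k)-g(x^*)\ge 0$, while optimality of $x^*_{\lambda_k}$ gives $\varphi_k(x^*_{\lambda_k})\le\varphi_k(x^*)$; together these yield $\lambda_k(f(\bar x_k)-f(x^*)) \le \varphi_k(\bar x_k)-\varphi_k(x^*_{\lambda_k})$. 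Because $\varphi_k$ is $L_k$-smooth with $\nabla\varphi_k(x^*_{\lambda_k})=0$ (Lemma~\ref{lemma:grad_track_props}(ii)--(iii)), the descent lemma bounds this by $\tfrac{L_k}{2}\|\bar x_k-x^*_{\lambda_k}\|_2^2$. Dividing by $\lambda_k$ and substituting the workhorse estimate, $L_k\le L_g+\lambda_0 L_f$, and $1/\lambda_k=(k+1)^b/\lambda_0$ produces $f(\bar x_k)-f(x^*) = \mathcal{O}\bigl(k^b/k^{2-2a-2b}\bigr)=\mathcal{O}(1/k^{2-2a-3b})$, with $\mathscr{Q}_1$ absorbing $1/\lambda_0$ and the squared constant from the workhorse estimate.

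Part~(c) uses the same decomposition in the other direction, $g(\bar x_k)-g(x^*) = \bigl(\varphi_k(\bar x_k)-\varphi_k(x^*)\bigr) - \lambda_k\bigl(f(\bar x_k)-f(x^*)\bigr)$. The grouped $\varphi_k$-term is again at most $\tfrac{L_k}{2}\|\bar x_k-x^*_{\lambda_k}\|_2^2=\mathcal{O}(1/k^{2-2a-2b})$ by the descent lemma and $\varphi_k(x^*_{\lambda_k})\le\varphi_k(x^*)$, which gives the first summand of the claim. For the remaining $-\lambda_k(f(\bar x_k)-f(x^*))=\lambda_k(f(x^*)-f(\bar x_k))$ term, part~(a) ensures $\bar x_k\to x^*$ so that $\{\bar x_k\}$ lies in a compact set; continuity of $f$ then bounds $f(x^*)-f(\bar x_k)$ above by a constant $\mathscr{Q}_3$, and this summand is $\mathcal{O}(\lambda_k)=\mathcal{O}(\lambda_0/k^b)$, matching the second term.

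The main obstacle is part~(b). The naive Lipschitz bound $f(\bar x_k)-f(x^*)=\mathcal{O}(\|\bar x_k-x^*\|_2)=\mathcal{O}(1/k^{1-a-b})$ is strictly weaker than the claimed $\mathcal{O}(1/k^{2-2a-3b})$, so the quadratic descent-lemma estimate on $\varphi_k$ and the cancellation of the sign-indefinite term $g(\bar x_k)-g(x^*)$ via $x^*\in\argmin g$ are indispensable rather than cosmetic. The delicate bookkeeping is the competition between the quadratic gain $1/k^{2-2a-2b}$ and the regularization penalty $1/\lambda_k=\mathcal{O}(k^b)$; their net exponent $2-2a-3b$ is positive, and the bound a genuine vanishing rate, exactly when $a,b$ are chosen so that $2-2a-3b>0$. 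A final point of care is that $\Delta_k$ is phrased relative to $x^*_{\lambda_{k-1}}$ whereas the smoothness argument needs $x^*_{\lambda_k}$, which is precisely why the Tikhonov step bound of Lemma~\ref{lemma:IR-props}(ii) must be invoked to pass between consecutive regularized solutions.
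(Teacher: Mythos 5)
Your proof is correct and follows essentially the same route as the paper: part (a) is read off Proposition~\ref{prop:recursive_bound_for_rate}(b) together with Lemma~\ref{lemma:IR-props}(i), and parts (b)--(c) come from the $\tfrac{L_k}{2}\|\cdot - x^*_{\lambda_k}\|_2^2$ descent bound on the regularized objective $g+\lambda_k f$ at its minimizer, combined with $g(\bar x_k)\ge g(x^*)$, division by $\lambda_k$, and boundedness of the $f$-gap. The only cosmetic difference is that the paper evaluates the smoothness inequality at $\bar x_{k+1}$, whose distance to $x^*_{\lambda_k}$ is exactly the first coordinate of $\Delta_{k+1}$, so the extra Tikhonov-step shift via Lemma~\ref{lemma:IR-props}(ii) that you invoke is unnecessary (though harmless, since $\Lambda_{k-1}=\mathcal{O}(1/k)$ is dominated by $1/k^{1-a-b}$).
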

\begin{proof}
\noindent \textbf{(a)} From Lemma \ref{lemma:IR-props}(a), we have that $\{x^*_{\lambda_k}\}$ converges to $x^*$. Moreover, from Proposition \ref{prop:recursive_bound_for_rate}(b), we have that $\|\bar x_{k}-x^*_{\lambda_{k-1}}\|_2$ converges to zero. Therefore, we have $\lim_{k \to \infty} \bar x_k = x^*$. To derive the bounds for $\left\|\mathbf{x}_k-\mathbf{1}\bar x_k\right\|_{\mathbf{R}}$ and $ \left\|\mathbf{y}_k-\nu \bar y_k\right\|_{\mathbf{C}}$, from the definition of $\Delta_k$ in Proposition \ref{prop:recursive_bound_for_rate}, we can write:
$\left\|\mathbf{x}_k-\mathbf{1}\bar x_k\right\|_{\mathbf{R}} \leq \|\Delta_k\|_2 =\mathcal{O}\left(k^{1-a-b}\right).$
Similarly, we obtain $\left\|\mathbf{y}_k-\nu \bar y_k\right\|_{\mathbf{C}}=\mathcal{O}\left(k^{1-a-b}\right)$.

\noindent \textbf{(b)} Consider the regularized function $g(x)+\lambda_k f(x)$. Note that it is $L_k$-smooth, where $L_k\triangleq L_g + \lambda_k L_f$. Since $x^*_{\lambda_k}$ is the minimizer of $g(x)+\lambda_k f(x)$, we have $x \in \mathbb{R}^n$:
\begin{align*}
g(x) +\lambda_k f(x) - g\left(x^*_{\lambda_k}\right) -\lambda_k f\left(x^*_{\lambda_k}\right) \leq \tfrac{L_k}{2}\left\|x- x^*_{\lambda_k}\right\|_2^2. 
\end{align*}
Also, we can write that $g\left(x^*_{\lambda_k}\right) +\lambda_k f\left(x^*_{\lambda_k}\right) \leq g\left(x^*\right) +\lambda_k f\left(x^*\right) $. Combining the preceding two relations and substituting $x$ by $\bar x_{k+1}$, we obtain $g\left(\bar x_{k+1}\right) - g\left(x^*\right) +\lambda_k\left(f\left(\bar x_{k+1}\right)- f\left(x^*\right)\right) \leq \tfrac{L_k}{2}\left\|\bar x_{k+1}- x^*_{\lambda_k}\right\|_2^2. $
Applying the bound from Proposition \ref{prop:recursive_bound_for_rate}(b), we obtain:
\begin{align}\label{eqn:thm1_part(b)_ineq1}
&g\left(\bar x_{k+1}\right) - g\left(x^*\right) +\lambda_k\left(f\left(\bar x_{k+1}\right)- f\left(x^*\right)\right)\notag \\ &\leq  \tfrac{L_k\mathscr{B}^2}{2(k+1)^{2-2a-2b}} \qquad \hbox{for all }k \geq \mathscr{K}. 
\end{align}
Note that from the definition of $x^*$ in Def. \ref{eqn:defs}, we have $g\left(\bar x_{k+1}\right) - g\left(x^*\right)\geq 0$. This implies that for all $k \geq \mathscr{K}$: 
\begin{align*}
f\left(\bar x_{k+1}\right)- f\left(x^*\right) 
\leq\left(\tfrac{L_0\mathscr{B}^2}{2\lambda_0}\right)\tfrac{1}{(k+1)^{2-2a-3b}}. 
\end{align*}
Therefore, the desired relation holds for $\mathscr{Q}_1\triangleq \tfrac{\mathscr{B}^2}{\lambda_0}$.

\noindent \textbf{(c)} From part (a), we know that $\{\bar x_{k}\}$ converges to $x^*$. This result and that $f$ is a continuous function imply that there exists a scalar $\mathscr{Q}_3>0$ such that $\left|f\left(\bar x_{k+1}\right)- f\left(x^*\right)\right| \leq \mathscr{Q}_3$. Thus, from the inequality \eqref{eqn:thm1_part(b)_ineq1} and the update rule for $\lambda_k$:
\begin{align*}
g\left(\bar x_{k+1}\right) - g\left(x^*\right) \leq \left(\tfrac{L_0\mathscr{B}^2}{2}\right)\tfrac{1}{(k+1)^{2-2a-2b}} +\tfrac{\mathscr{Q}_3\lambda_0}{(k+1)^b},
\end{align*}
for all $k \geq \mathscr{K}$. Therefore, the desired relation holds.
\end{proof}
In the following, we present the implications of the results of Theorem \ref{thm:rate_ana} in solving the constrained problem \eqref{eqn:lp_cnstr_problem}.
\begin{corollary}[Rates for the linearly constrained model]\label{cor:rate_lin_constr}
Consider problem \eqref{eqn:lp_cnstr_problem} and Algorithm \ref{algorithm:IR-push-pull} where $g_i(x)$ is defined by \eqref{eqn:gi}. Let the feasible set be nonempty and Assumption \ref{assum:problem}(a) and Assumption \ref{assum:RC} hold. Suppose Assumption \ref{assum:update_rules} holds with $a:=0.2$ and $b:=0.2-\epsilon/3$ where $\epsilon>0$ is a sufficiently small scalar. Then, we have $\lim_{k\to\infty}\bar x_{k}=x^*$ and for any sufficiently large $k$: 

\noindent (a) We have $\|\mathbf{x}_{k+1}-\mathbf{1}\bar x_{k+1}\|_{\mathbf{R}} =\mathcal{O}\left(1/k^{0.6+\epsilon/3}\right)$, and $\left\|\mathbf{y}_{k+1} - \nu \bar y_{k+1}\right\|_{\mathbf{C}}=\mathcal{O}\left(1/k^{0.6+\epsilon/3}\right)$.

\noindent (b) We have $f(\bar x_k)- f(x^*)=\mathcal{O}\left(1/k^{1-\epsilon}\right)$.

\noindent (c) We have $\|\mathbf{A}\bar x_k-\mathbf{b}\|_2^2 = \mathcal{O}\left(1/k^{0.2-\epsilon/3}\right)$ where $\mathbf{A} \triangleq \left[A_1^T,\ldots,A_m^T\right]^T $ and $\mathbf{b} \triangleq \left[b_1^T,\ldots,b_m^T\right]$. 
\end{corollary}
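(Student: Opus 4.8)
The plan is to reduce Corollary \ref{cor:rate_lin_constr} to Theorem \ref{thm:rate_ana} by first certifying that the reformulation via \eqref{eqn:gi} meets the theorem's hypotheses, and then specializing the three rate exponents to $a=0.2$, $b=0.2-\epsilon/3$. First I would sum the local penalties in \eqref{eqn:gi} to obtain $g(x)=\sum_{i=1}^m g_i(x)=\tfrac12\|\mathbf{A}x-\mathbf{b}\|_2^2+\tfrac12\sum_{j\in\mathcal{J}}\max\{0,-x_j\}^2$, where the factor $1/m$ cancels against the $m$ identical penalty contributions and $\mathbf{A},\mathbf{b}$ are the stacked data of part (c). Since this $g$ is nonnegative and vanishes precisely when $A_ix=b_i$ for all $i$ and $x_j\ge 0$ for $j\in\mathcal{J}$, feasibility of \eqref{eqn:lp_cnstr_problem} gives $\min_x g(x)=0$ with $\argmin_x g(x)$ equal to the feasible set of \eqref{eqn:lp_cnstr_problem}. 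Hence the bilevel model \eqref{eqn:bilevel_problem} coincides with \eqref{eqn:lp_cnstr_problem}, Assumption \ref{assum:problem}(c) holds, and the bilevel optimum $x^*$ satisfies $g(x^*)=0$.

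Next I would verify Assumption \ref{assum:problem}(b) for the constructed $g_i$: each quadratic term $\tfrac12\|A_ix-b_i\|_2^2$ is convex with gradient $A_i^T(A_ix-b_i)$ Lipschitz of modulus $\lambda_{\max}(A_i^TA_i)$, and the squared hinge $t\mapsto\max\{0,-t\}^2$ is convex and continuously differentiable with $2$-Lipschitz derivative, so each $g_i$ is convex and $L_g$-smooth for a suitable $L_g$. Together with the hypothesized Assumption \ref{assum:problem}(a) on the $f_i$ and Assumption \ref{assum:RC}, this makes Theorem \ref{thm:rate_ana} applicable. I would also record that $a=0.2$ and $b=0.2-\epsilon/3$ satisfy Assumption \ref{assum:update_rules} for sufficiently small $\epsilon>0$, since then $0<b<a<1$ and $a+b=0.4-\epsilon/3<1$.

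Parts (a) and (b) then follow by direct substitution. The consensus exponent in Theorem \ref{thm:rate_ana}(a) becomes $1-a-b=0.6+\epsilon/3$, yielding the stated $\mathcal{O}(1/k^{0.6+\epsilon/3})$ bounds on $\|\mathbf{x}_{k+1}-\mathbf{1}\bar x_{k+1}\|_{\mathbf{R}}$ and $\|\mathbf{y}_{k+1}-\nu\bar y_{k+1}\|_{\mathbf{C}}$, while $\lim_k\bar x_k=x^*$ is inherited verbatim. For part (b), the exponent $2-2a-3b$ in Theorem \ref{thm:rate_ana}(b) evaluates to $1+\epsilon$, so $f(\bar x_k)-f(x^*)=\mathcal{O}(1/k^{1+\epsilon})$, which in particular gives the stated $\mathcal{O}(1/k^{1-\epsilon})$ bound.

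Part (c) is the one step that is not pure bookkeeping, and is where I would focus the care. Applying Theorem \ref{thm:rate_ana}(c) with $g(x^*)=0$ gives $g(\bar x_k)\le\tfrac{\mathscr{Q}_2(L_g+\lambda_0L_f)}{2}k^{-(2-2a-2b)}+\lambda_0\mathscr{Q}_3k^{-b}$; with the chosen exponents $2-2a-2b=1.2+2\epsilon/3$ and $b=0.2-\epsilon/3$, the slower second term dominates and $g(\bar x_k)=\mathcal{O}(1/k^{0.2-\epsilon/3})$. I would then discard the nonnegative squared-hinge contribution to obtain the lower bound $g(\bar x_k)\ge\tfrac12\|\mathbf{A}\bar x_k-\mathbf{b}\|_2^2$, transferring the decay rate to infeasibility and yielding $\|\mathbf{A}\bar x_k-\mathbf{b}\|_2^2=\mathcal{O}(1/k^{0.2-\epsilon/3})$. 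The main obstacle is not any single estimate but the certification in the first two paragraphs, namely confirming that the penalized $g$ genuinely recovers the feasible set, has $g(x^*)=0$, and is globally $L_g$-smooth despite the nonsmooth-looking hinge; once this reformulation is established, Theorem \ref{thm:rate_ana} does all the analytic heavy lifting and the remaining content is the exponent arithmetic together with the single inequality $g(x)\ge\tfrac12\|\mathbf{A}x-\mathbf{b}\|_2^2$.
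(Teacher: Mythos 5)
Your proposal is correct and follows essentially the same route as the paper: establish the equivalence of \eqref{eqn:lp_cnstr_problem} with the bilevel model via the zero-minimum penalty $g$, verify the $L_g$-smoothness of the $g_i$ in \eqref{eqn:gi}, and then substitute $a=0.2$, $b=0.2-\epsilon/3$ into Theorem \ref{thm:rate_ana}. You actually spell out two points the paper leaves implicit --- the derivation of part (c) from $g(x^*)=0$ and $g(x)\geq\tfrac{1}{2}\|\mathbf{A}x-\mathbf{b}\|_2^2$, and the observation that the exponent $2-2a-3b=1+\epsilon$ is in fact stronger than the stated $1-\epsilon$ --- both of which are accurate.
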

\begin{proof}
First, we show that problem \eqref{eqn:lp_cnstr_problem} is equivalent to problem \eqref{eqn:bilevel_problem}. Let $X_1$ and $X_2$ denote the feasible set of problem \eqref{eqn:bilevel_problem} and \eqref{eqn:lp_cnstr_problem}, respectively. Suppose $\hat x \in X_1$ is an arbitrary vector. Thus, we have $\hat x \in \argmin_{x \in \mathbb{R}^n}\tfrac{1}{2}\textstyle\sum_{i=1}^m \|A_ix-b_i\|_2^2+\tfrac{1}{2m}\textstyle\sum_{j\in \mathcal{J}}\max\{0,-x_j\}^2$. From the assumption $X_2\neq \emptyset$, there exists a point $\bar x$ satisfying $\mathbf{A}\bar x =\mathbf{b}$ and $\bar x_j\geq 0$ for all $j \in \mathcal{J}$. This implies that the minimum of the function $\tfrac{1}{2}\textstyle\sum_{i=1}^m \|A_ix-b_i\|_2^2+\tfrac{1}{2m}\textstyle\sum_{j\in \mathcal{J}}\max\{0,-x_j\}^2$ is zero. Therefore, $\hat x$ must satisfy $\mathbf{A} x =\mathbf{b}$ and $ x_j\geq 0$ for all $j \in \mathcal{J}$, implying that $\hat x \in X_2$. Next, suppose  $\hat x \in X_2$ is an arbitrary vector. Thus, we have $\tfrac{1}{2}\textstyle\sum_{i=1}^m \|A_i\hat x-b_i\|_2^2+\tfrac{1}{2m}\textstyle\sum_{j\in \mathcal{J}}\max\{0,-\hat x_j\}^2=0$ implying that $\hat x$ is a minimizer of the $\tfrac{1}{2}\textstyle\sum_{i=1}^m \|A_ix-b_i\|_2^2+\tfrac{1}{2m}\textstyle\sum_{j\in \mathcal{J}}\max\{0,-x_j\}^2$. Therefore, we have $\hat x \in X_1$. We conclude that $X_1 = X_2$ and thus problems \eqref{eqn:bilevel_problem} and \eqref{eqn:lp_cnstr_problem} are equivalent. Next, we show that Assumption \ref{assum:problem}(b) is satisfied. From the definition of function $g_i$ by \eqref{eqn:gi}, it is not hard to show that $\nabla  g_i(x)$ indeed exists and $\nabla g_i(x)=A_i^T\left(A_ix-b_i\right) - \tfrac{1}{m}\textstyle\sum_{j \in \mathcal{J}}\max\{0,-x_j\}\mathbf{e}_j$. 
Note that the mapping $A_i^T\left(A_ix-b_i\right)$ is Lipschitz with parameter $\rho\left(A_i^TA_i\right)$ denoting the spectral norm of $A_i^TA_i$. Also, it can be shown that the mapping $- \tfrac{1}{m}\textstyle\sum_{j \in \mathcal{J}}\max\{0,-x_j\}\mathbf{e}_j$ is Lipschitz with parameter $\tfrac{1}{\sqrt{m}}$ (proof omitted). Thus, we conclude that Assumption \ref{assum:problem}(b) is met for $L_g \triangleq \max_{i \in [m]}\rho\left(A_i^TA_i\right)+\tfrac{1}{\sqrt{m}}$. Therefore, all conditions of Theorem \ref{thm:rate_ana} hold. To obtain the rate results in part (a), (b), (c), it suffices to substitute $a$ by $0.2$ and $b$ by $0.2 -\tfrac{\epsilon}{3}$ in the corresponding parts in Theorem \ref{thm:rate_ana}.
\end{proof}
Lastly, we present the implications of the results of Theorem \ref{thm:rate_ana} in addressing the absence of strong convexity.
\begin{corollary}[Rates for problem \eqref{eqn:l2_uncnstr_problem}]\label{cor:rate_unconstr_nonstrongly}
Consider problem \eqref{eqn:l2_uncnstr_problem} and Algorithm \ref{algorithm:IR-push-pull} where we set $f_i(x) := \|x\|_2^2/m$. Let Assumption \ref{assum:problem}(b), \ref{assum:problem}(c) and Assumption \ref{assum:RC} hold. Suppose Assumption \ref{assum:update_rules} holds with $a:=0.4$ and $b:=0.4-\epsilon$ where $\epsilon>0$ is a sufficiently small scalar. Let $x^*_{\ell_2}$ denote the least $\ell_2$-norm optimal solution of problem \eqref{eqn:l2_uncnstr_problem}. Then, for any sufficiently large $k$: 

\noindent (a) We have $\|\mathbf{x}_{k+1}-\mathbf{1}\bar x_{k+1}\|_{\mathbf{R}} =\mathcal{O}\left(1/k^{0.2+\epsilon}\right)$ and $\left\|\mathbf{y}_{k+1} - \nu \bar y_{k+1}\right\|_{\mathbf{C}}=\mathcal{O}\left(1/k^{0.2+\epsilon}\right)$.

\noindent (b) We have $g(\bar x_k)-g\left(x^*_{\ell_2}\right)=\mathcal{O}\left(1/k^{0.4-\epsilon}\right)$ and that $\|\bar x_{k} - x^*_{\ell_2}\|_2^2  = \mathcal{O}\left(1/k^{3\epsilon}\right)$.
\end{corollary}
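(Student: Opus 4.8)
The plan is to obtain Corollary \ref{cor:rate_unconstr_nonstrongly} as a specialization of Theorem \ref{thm:rate_ana} to the choice $f_i(x)=\|x\|_2^2/m$, so that most of the work is verifying the hypotheses and substituting the exponents $a=0.4$, $b=0.4-\epsilon$; the only genuinely new ingredient is the squared-iterate bound $\|\bar x_k-x^*_{\ell_2}\|_2^2=\mathcal{O}(1/k^{3\epsilon})$. First I would check the hypotheses: with $f_i(x)=\|x\|_2^2/m$ we have $\nabla f_i(x)=2x/m$, so each $f_i$ is $\mu_f$-strongly convex and $L_f$-smooth with $\mu_f=L_f=2/m$, which together with Assumption \ref{assum:problem}(b),(c) and Assumption \ref{assum:RC} verifies Assumption \ref{assum:problem} and all requirements of Theorem \ref{thm:rate_ana}. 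Since $f(x)=\sum_{i=1}^m f_i(x)=\|x\|_2^2$, the outer problem in \eqref{eqn:bilevel_problem} selects the minimum Euclidean-norm element of $\argmin g$, so $x^*=x^*_{\ell_2}$ and in particular $g(x^*)=g(x^*_{\ell_2})$.

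Parts (a) and the first assertion of (b) then follow by direct substitution. Plugging $a=0.4$, $b=0.4-\epsilon$ into Theorem \ref{thm:rate_ana}(a) gives $1-a-b=0.2+\epsilon$, which yields the stated $\mathcal{O}(1/k^{0.2+\epsilon})$ consensus rates for $\|\mathbf{x}_{k+1}-\mathbf{1}\bar x_{k+1}\|_{\mathbf{R}}$ and $\|\mathbf{y}_{k+1}-\nu\bar y_{k+1}\|_{\mathbf{C}}$, as well as $\bar x_k\to x^*=x^*_{\ell_2}$. For the infeasibility, Theorem \ref{thm:rate_ana}(c) produces two terms with exponents $2-2a-2b=0.4+2\epsilon$ and $b=0.4-\epsilon$; for small $\epsilon$ the dominant (slower) one is $\mathcal{O}(1/k^{0.4-\epsilon})$, which, using $g(x^*)=g(x^*_{\ell_2})$, is exactly the claimed rate for $g(\bar x_k)-g(x^*_{\ell_2})$.

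The remaining and most delicate assertion is $\|\bar x_k-x^*_{\ell_2}\|_2^2=\mathcal{O}(1/k^{3\epsilon})$, whose exponent matches $2-2a-3b=3\epsilon$. The natural starting point is Theorem \ref{thm:rate_ana}(b), which gives $f(\bar x_k)-f(x^*)=\mathcal{O}(1/k^{3\epsilon})$, and the goal is to convert this function-value estimate into an iterate estimate using strong convexity of the quadratic $f$. Because $f(x)=\|x\|_2^2$ is an exact quadratic, I would use the identity $\|\bar x_k-x^*\|_2^2=\big(f(\bar x_k)-f(x^*)\big)-2\langle x^*,\bar x_k-x^*\rangle$, so that everything reduces to showing the first-order term $-2\langle x^*,\bar x_k-x^*\rangle$ is of order $\mathcal{O}(1/k^{3\epsilon})$ or faster. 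On the feasible set this term has a favorable sign, by the variational characterization of the least-norm solution ($\langle x^*,z-x^*\rangle\ge 0$ for every $z\in\argmin g$, equivalently $\|x^*\|_2\le\|z\|_2$), so the entire difficulty lies in the infeasibility of $\bar x_k$.

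To close this, I would insert the Tikhonov point and split $\bar x_k-x^*=(\bar x_k-x^*_{\lambda_{k-1}})+(x^*_{\lambda_{k-1}}-x^*)$: the component $\langle x^*,\bar x_k-x^*_{\lambda_{k-1}}\rangle$ is bounded by $\|x^*\|_2\,\|\bar x_k-x^*_{\lambda_{k-1}}\|_2=\mathcal{O}(1/k^{0.2+\epsilon})$ through Proposition \ref{prop:recursive_bound_for_rate}(b), which decays faster than $1/k^{3\epsilon}$, while the component $\langle x^*,x^*_{\lambda_{k-1}}-x^*\rangle$ is controlled by the decay of the Tikhonov-trajectory error, for which the monotonicity $\|x^*_{\lambda_k}\|_2\le\|x^*\|_2$ (established inside the proof of Lemma \ref{lemma:IR-props}(a)) together with the step bound of Lemma \ref{lemma:IR-props}(b) are the available tools. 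I expect obtaining a genuine decay rate for this Tikhonov term — as opposed to the routine exponent bookkeeping of parts (a) and (b) — to be the principal obstacle of the proof, with the final exponent $3\epsilon$ being inherited directly from the $f$-suboptimality estimate of Theorem \ref{thm:rate_ana}(b).
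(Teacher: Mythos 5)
For parts (a) and the $g$-suboptimality claim in (b), your proposal coincides with the paper's proof, which is exactly the one-liner you describe: verify that $f_i(x)=\|x\|_2^2/m$ gives $\mu_f=L_f=2/m$ so Assumption \ref{assum:problem}(a) holds, identify $x^*=x^*_{\ell_2}$, and substitute $a=0.4$, $b=0.4-\epsilon$ into Theorem \ref{thm:rate_ana}, giving $1-a-b=0.2+\epsilon$ and $\min\{2-2a-2b,\,b\}=0.4-\epsilon$. That portion is correct and identical in approach.

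The difference is the final claim $\|\bar x_k-x^*_{\ell_2}\|_2^2=\mathcal{O}(1/k^{3\epsilon})$. The paper's proof asserts that this, too, follows ``by substitution'' into Theorem \ref{thm:rate_ana}; but Theorem \ref{thm:rate_ana} contains no iterate-distance statement, only the $f$-gap bound $f(\bar x_k)-f(x^*)=\mathcal{O}(1/k^{2-2a-3b})$, and since $x^*$ is not the unconstrained minimizer of $f(x)=\|x\|_2^2$ (unless $0\in\argmin g$), the identity $\|\bar x_k-x^*\|_2^2=\bigl(f(\bar x_k)-f(x^*)\bigr)-2\langle x^*,\bar x_k-x^*\rangle$ leaves a first-order cross term that must be controlled separately. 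You are right to isolate this as the delicate point — your diagnosis is sharper than the paper's own proof, which silently skips it. However, as written your argument does not close it, and you say so yourself: the piece $\langle x^*,\bar x_k-x^*_{\lambda_{k-1}}\rangle$ is indeed $\mathcal{O}(1/k^{0.2+\epsilon})$, which suffices, but for the Tikhonov piece $\langle x^*,x^*_{\lambda_{k-1}}-x^*\rangle$ the tools you cite do not yield a rate: the inequality hidden in the proof of Lemma \ref{lemma:IR-props}(a) gives only $\tfrac{\mu_f}{2}\|x^*-x^*_{\lambda}\|_2^2\le f(x^*)-f(x^*_{\lambda})$ (equivalently $\langle x^*,x^*_\lambda\rangle\ge\|x^*_\lambda\|_2^2$), which converts the problem into bounding $f(x^*)-f(x^*_\lambda)$ without quantifying it, and the step bound of Lemma \ref{lemma:IR-props}(b) gives increments of order $\Lambda_{k-1}\approx b/k$, which are not summable, so telescoping does not produce a decay rate for $\|x^*_{\lambda_k}-x^*\|_2$ either. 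So there is a genuine gap in your proof of that one assertion — but it is the same gap present in the paper, and your write-up at least makes it visible and reduces it to a single missing ingredient: a quantitative convergence rate for the Tikhonov trajectory $x^*_\lambda\to x^*$.
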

\begin{proof}
Note that problem \eqref{eqn:l2_uncnstr_problem} is equivalent to problem \eqref{eqn:bilevel_problem} where $f_i(x) := \|x\|_2^2/m$. This implies that Assumption \ref{assum:problem}(a) holds with $\mu_f = L_f =\tfrac{2}{m}$.  Therefore, all conditions of Theorem \ref{thm:rate_ana} hold. To obtain the rate results in part (a) and (b), it suffices to substitute $a$ by $0.4$ and $b$ by $0.4 -\epsilon$ in the rate results in Theorem \ref{thm:rate_ana}.
\end{proof}
\section{Numerical results}
\noindent \textbf{(1) Distributed sensor network problems:} We first compare Algorithm \ref{algorithm:IR-push-pull} with the Push-Pull algorithm \cite{pushi1_2020} in a sensor network example. We consider the unconstrained ill-posed problem $\min_{x \in \mathbb{R}^n}\textstyle\sum_{k=1}^m\|z_i-H_ix\|^2_2$, where $H_i \in \mathbb{R}^{d\times n}$ and $z_i \in \mathbb{R}^d$ denote the measurement matrix and the noisy observation of the $i^{\text{th}}$ sensor. Due to the challenges raised by ill-conditioning and also the lack of convergence and rate guarantees, Push-Pull algorithm needs to be applied to a regularized variant of the problem. To this end, in the implementation of the Push-Pull scheme, we use an $\ell_2$ regularizer with a parameter $0.1$. Accordingly, in Algorithm \ref{algorithm:IR-push-pull}, we set $\lambda_0:=0.1$. We employ the tuning rules according to Corollary \ref{cor:rate_unconstr_nonstrongly}, while a constant step-size is used for the Push-Pull method. We generate $H_i$ and $z_i$ randomly and choose $m=10$, $n=20$, and $d=1$. We generate matrices $\mathbf{R}$ and $\mathbf{C}$ from the same underlying graph with two different directed graphs (see Figure \ref{fig:comparison with push-pull}). We use $\mathbf{R}=\mathbf{I}-\tfrac{1}{2d_{in}^{\max}}\mathbf{L}_{\mathbf{R}}$ where $\mathbf{L}_{\mathbf{R}}$ denotes the Laplacian matrix and $d_{in}^{\max}$ denotes the maximum in-degree. We use the same formula for $\mathbf{C}$ using maximum out-degree. 
\begin{table}[H]
\setlength{\tabcolsep}{3pt}
\centering
 \begin{tabular}{ c} 
\includegraphics[scale=.42, angle=0]{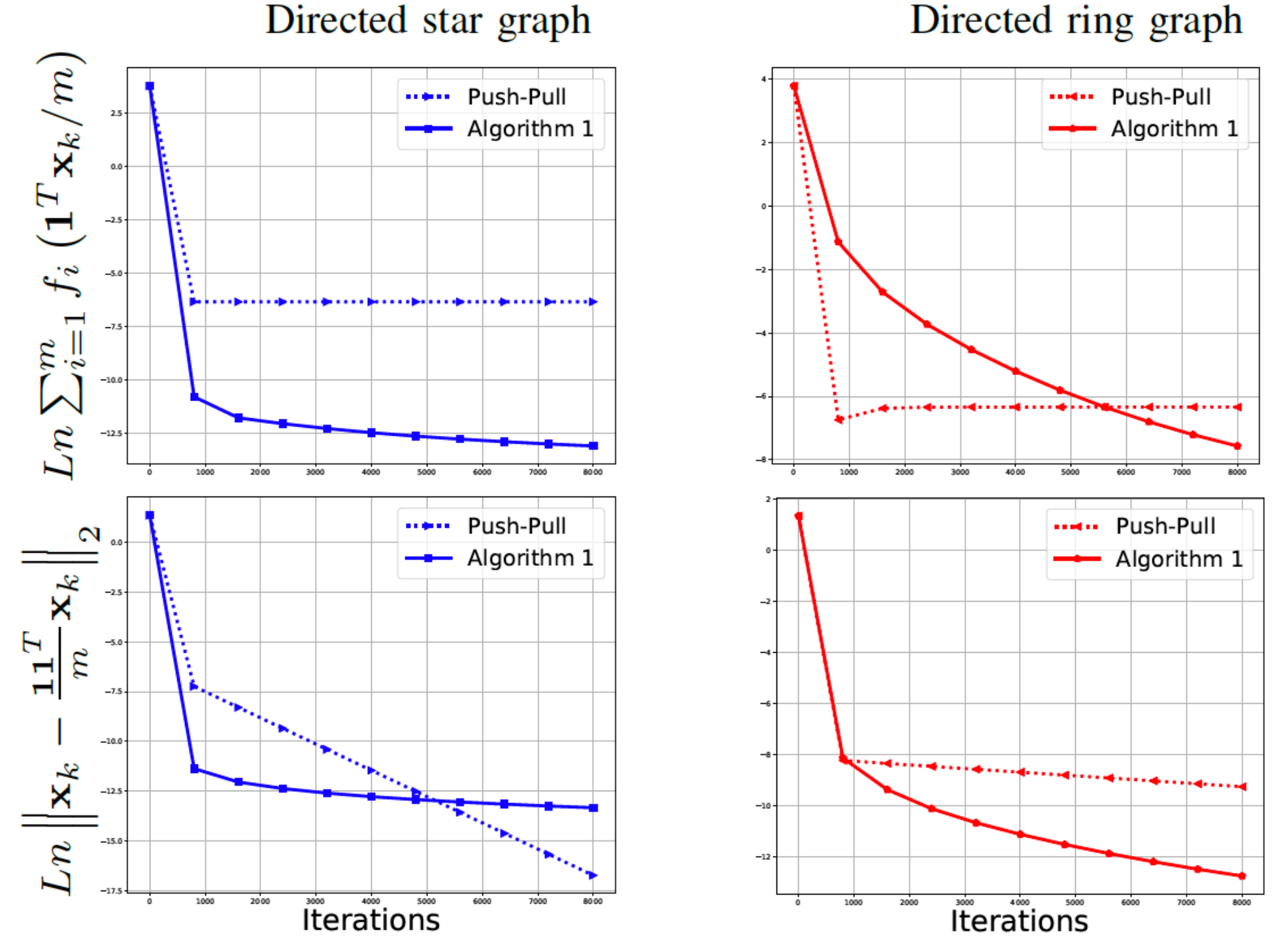}
\end{tabular}
\captionof{figure}{{Algorithm \ref{algorithm:IR-push-pull} vs. regularized Push-Pull algorithm under different choices of $\mathbf{R}$ and $\mathbf{C}$.\\}}
\vspace{-.2in}
\label{fig:comparison with push-pull}
\end{table}
\noindent \textit{Insights:} Figure \ref{fig:comparison with push-pull} shows the comparison of the two schemes. We compare objective function values and consensus violations. For the latter, we use the term $\left\|\mathbf{x}_k-\tfrac{\mathbf{1}\mathbf{1}^T}{m}\mathbf{x}_k\right\|_2$. In terms of the objective function value, Algorithm \ref{algorithm:IR-push-pull} performs significantly better both cases.

\noindent \textbf{(2) Distributed ill-conditioned linear inverse problems}: Here  $g(x) :=\textstyle\sum_{i=1}^m\|A_ix-b_i\|_2^2$ and $f(x):=\tfrac{1}{2}\|x\|^2_2$, where $A_i \in \mathbb{R}^{d\times n}$ and $b_i \in \mathbb{R}^d$ denote the locally known $i^{\textit{th}}$ block of the Toeplitz blurring operator and the given blurred image, respectively. Figure \ref{fig:debl} shows the progress of deblurring across the 9 agents over a directed ring graph.

\begin{table}[H]
\setlength{\tabcolsep}{3pt}
\centering
 \begin{tabular}{c  c  c } 
\begin{minipage}{.15\textwidth}
\includegraphics[scale=.15, angle=0]{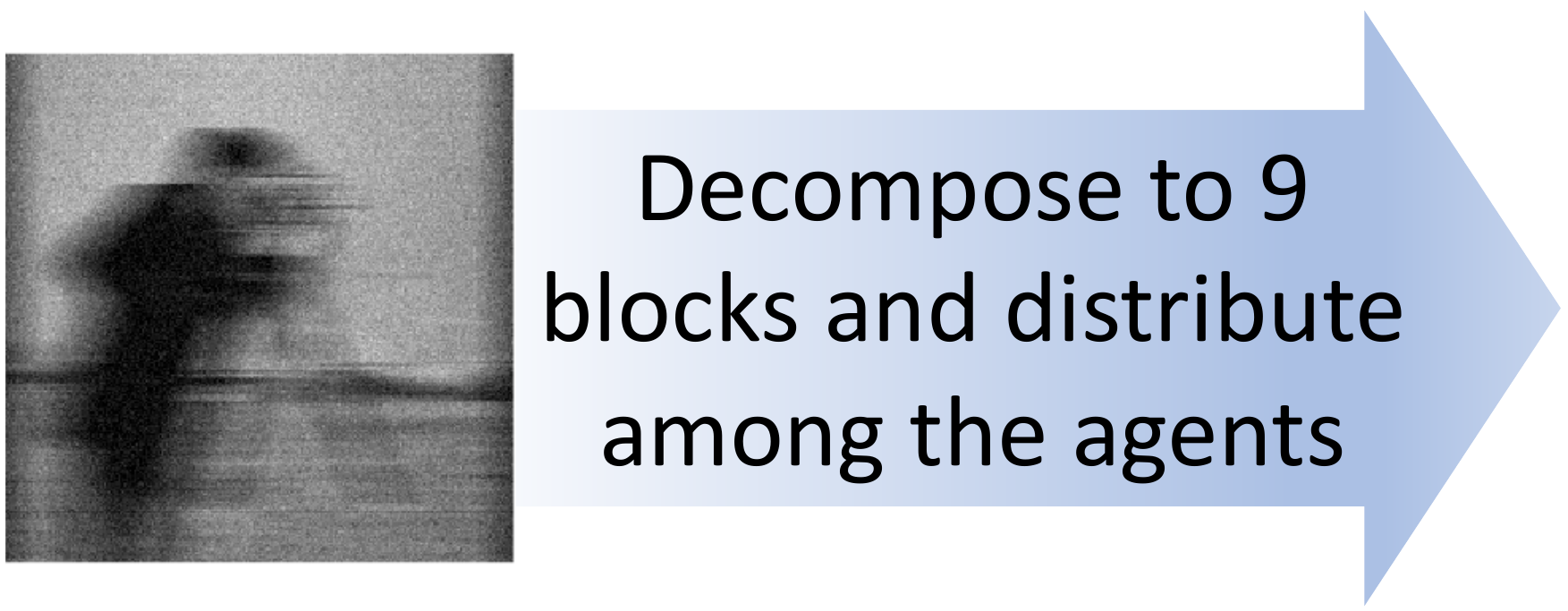}
\end{minipage}
&
\begin{minipage}{.15\textwidth}
\includegraphics[scale=.35, angle=0]{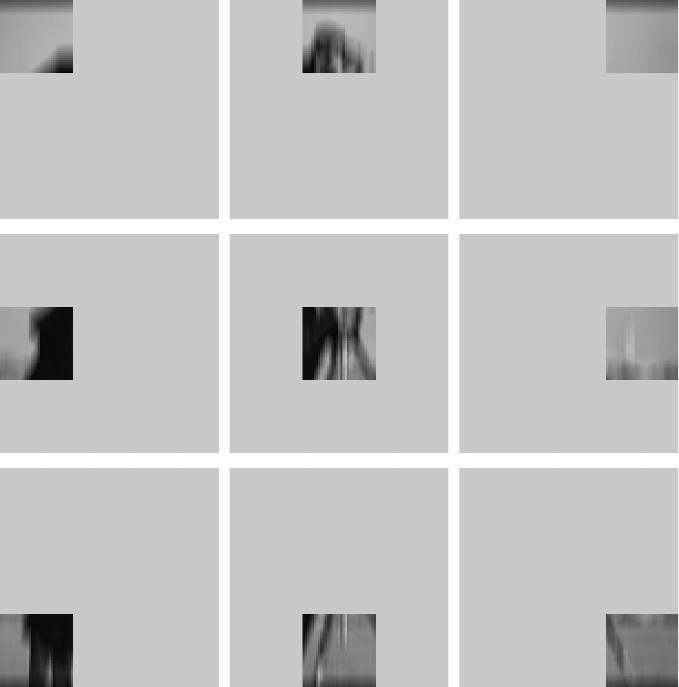}
\end{minipage}
&
\begin{minipage}{.15\textwidth}
\includegraphics[scale=.35, angle=0]{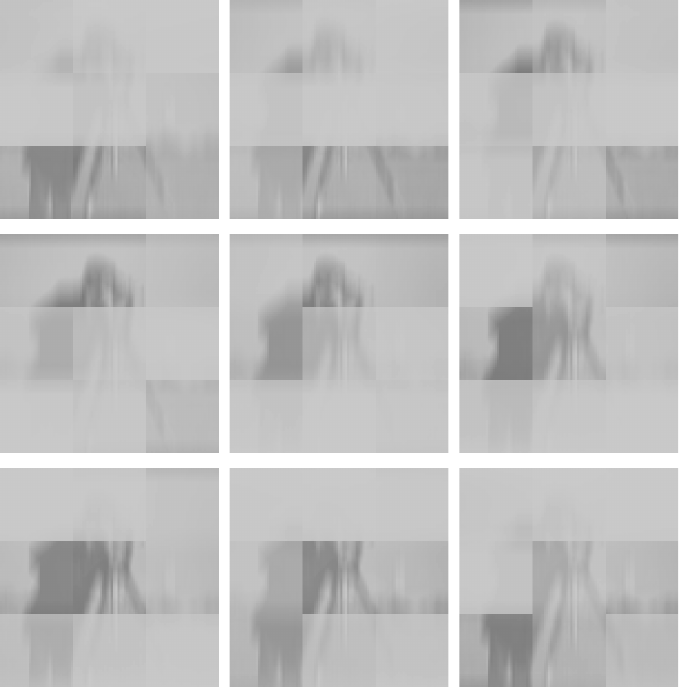}
\end{minipage}\\

\tiny{The blurred image} &\tiny{Initial distributed blocks} & \tiny{After $5$ iterations} 
\\

\begin{minipage}{.15\textwidth}
\includegraphics[scale=.35, angle=0]{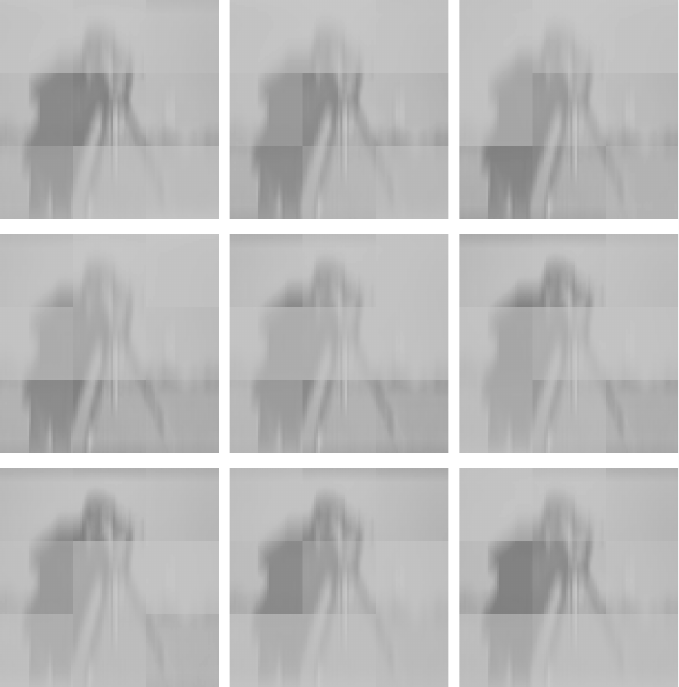}
\end{minipage}
	&
\begin{minipage}{.15\textwidth}
\includegraphics[scale=.35, angle=0]{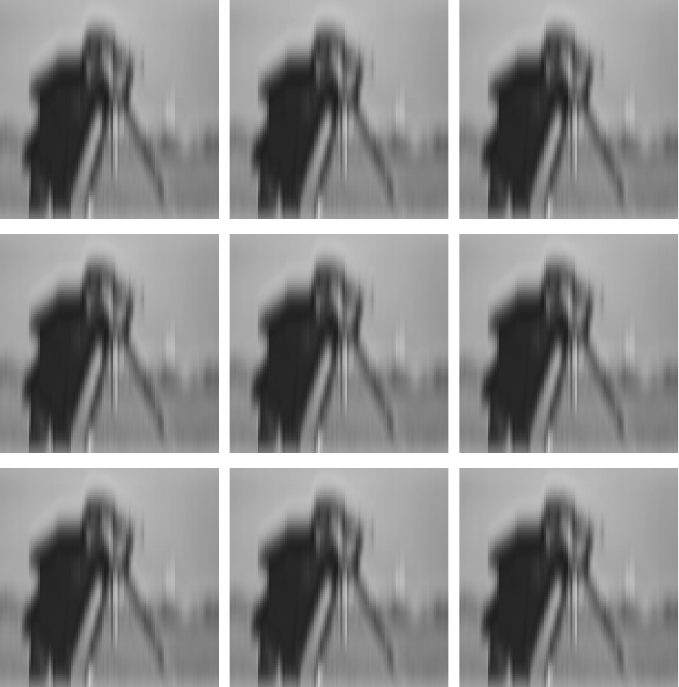}
\end{minipage}
	&
\begin{minipage}{.15\textwidth}
\includegraphics[scale=.35, angle=0]{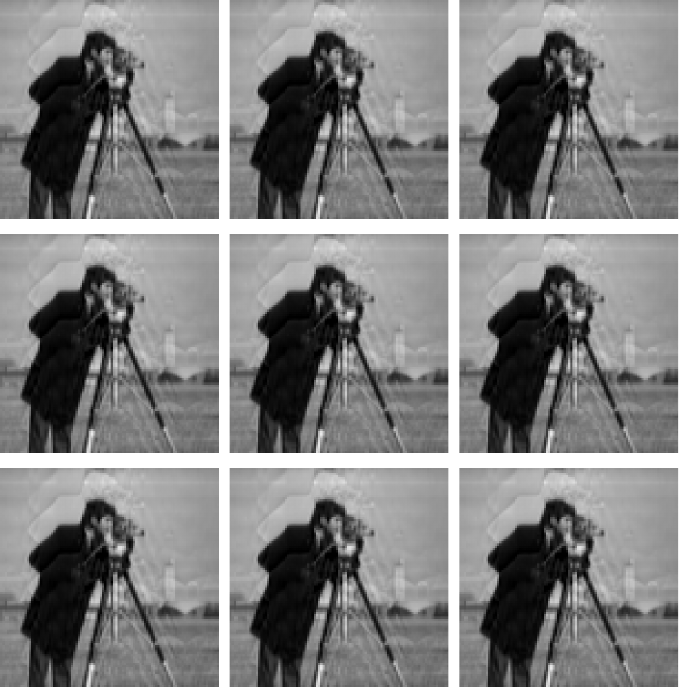}
\end{minipage}
\\
 \tiny{After $10$ iterations} & \tiny{After $10^2$ iterations}& \tiny{After $10^4$ iterations}
\end{tabular}
\captionof{figure}{Performance of IR-PushPull in distributed image deblurring using $9$ agents over a ring digraph}\label{fig:debl}
 \vspace{-.1in}
\label{fig:deblur}
\end{table}

\noindent \textbf{(3) Distributed linear SVM:} Consider a linear SVM where $\mathcal{D}\triangleq \{\left(u_\ell,v_\ell\right)\in \mathbb{R}^n\times \{-1,+1\}\mid \ell \in \mathcal{S}\}$ denotes the data set and $\mathcal{S}\triangleq \{1,\ldots,s\}$ denotes the index set. Let $\mathcal{S}$ be partitioned into $\mathcal{S}_{\tiny\mbox{train}}$ and $\mathcal{S}_{\tiny \mbox{test}}$ randomly. Let $\mathcal{S}_i$ denote the data locally known by agent $i$ where $\cup_{i=1}^m\mathcal{S}_i =\mathcal{S}_{\tiny\mbox{train}}$. Consider the following primal SVM model: 
\begin{equation}
\min_{x,b,z} \textstyle\sum_{i=1}^m\left(\tfrac{\eta}{2m}\|x\|_2^2+\textstyle\sum_{\ell \in \mathcal{S}_i}z_\ell \right) \  \text{s.t.}
 \begin{array}{@{}ll@{}}
    \scriptstyle{ v_\ell\left(x^Tu_\ell+b\right)\geq 1- z_\ell, }\\
    \scriptstyle{z_\ell\geq 0, \ \forall \ell \in \mathcal{S}_i, \ \forall i \in [m] ,}
  \end{array}
\end{equation} where $x \in \mathbb{R}^n, b \in \mathbb{R}, z \in \mathbb{R}^{|\mathcal{S}_{\tiny\mbox{train}}|}$, $\eta>0$. Figure \ref{fig:svm} shows the implementation of IR-PushPull on directed line and star graphs with $m:=10$ and  $\eta:=0.05$. 

\noindent \textit{Insights:} IR-PushPull performs very well compared to the centralized variant. This is examined both in terms of suboptimality and infeasibility metrics in different network topology settings. 

\begin{table}[t]
\setlength{\tabcolsep}{3pt}
\centering
 \begin{tabular}{ c} 
\includegraphics[scale=.25, angle=0]{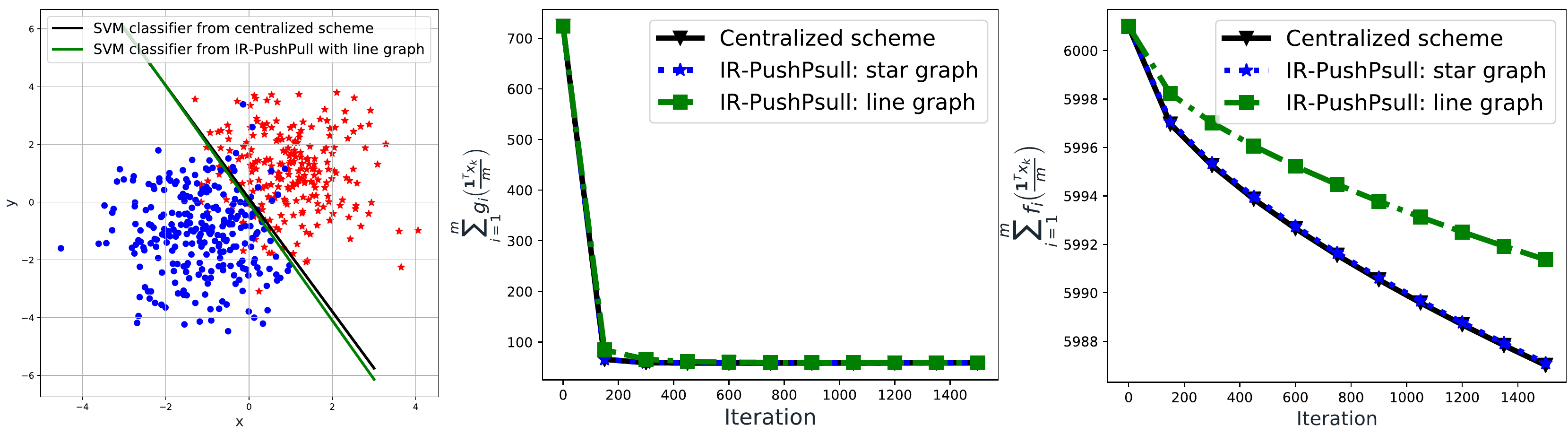}
\end{tabular}
\captionof{figure}{IR-PushPull vs. centralized scheme for  SVM: $10$ agents, $300$ training sample size, and $500$ testing sample size}
\vspace{-.2in}
\label{fig:svm}
\end{table}









\bibliographystyle{plain}
\bibliography{ref_fy,ref_allmypapers}

\end{document}